\DeclareSymbolFont{extraup}{U}{zavm}{m}{n}
\DeclareMathSymbol{\varheart}{\mathalpha}{extraup}{86}
\theoremstyle{plain}
\newtheorem{thm}{Theorem}
\theoremstyle{definition}
\newtheorem{defi}[thm]{Definition}
\newtheorem{cor}[thm]{Corollary}
\newtheorem*{cor*}{Corollary}
\newtheorem*{lem*}{Lemma}
\newtheorem*{thm*}{Theorem}
\newtheorem{lem}[thm]{Lemma}
\newtheorem{question}[thm]{Question}
\newtheorem{clm}[thm]{Claim}
\newtheorem{rem}[thm]{Remark}
\renewcommand{\phi}{\varphi}
\date{\today}
\begin{document}

\title[Parallel non-linear iterations]{Parallel non-linear iterations}

\author{\"Omer Faruk Ba\u{g}}
\address{Institute of Mathematics, Kurt G\"odel Research Center, University of Vienna, Augasse 2-6, UZA 1 - Building 2, 1090 Wien, Austria}
\email{oemer.bag@univie.ac.at}

\author{Vera Fischer}
\address{Institute of Mathematics, Kurt G\"odel Research Center, University of Vienna, Augasse 2-6, UZA 1 - Building 2, 1090 Wien, Austria}
\email{vera.fischer@univie.ac.at}

\thanks{\emph{Acknowledgments.}: The authors would like to thank the Austrian Science Fund (FWF) for the generous support through Grant Y1012-N35.}

\subjclass[2000]{03E35, 03E17}

\keywords{cardinal characteristics; forcing; matrix iteration; non-linear iterations; bounding, splitting, dominating, reaping, evasion numbers}

\begin{abstract}
Developing a system of parallel non-linear iterations, we
establish the consistency of $\mathfrak{b}<\mathfrak{s}<\mathfrak{d}<\mathfrak{c}$ where $\mathfrak{b}, \mathfrak{d}, \mathfrak{c}$ are arbitrary subject to the known ZFC restrictions and $\mathfrak{s}$ is regular. By evaluating other invariants we achieve also the constellations $\mathfrak{b}<\mathfrak{r}<\mathfrak{d}<\mathfrak{c}$, $\mathfrak{b}<\mathfrak{e}<\mathfrak{d}<\mathfrak{c}$ and $\mathfrak{b}<\mathfrak{u}<\mathfrak{d}<\mathfrak{c}$.
\end{abstract}

\maketitle

\section{Introduction}

Cardinal characteristics of the continuum are well studied in many research and survey articles like \cite{blass} and \cite{vnDwn}. In this article we study the the bounding, dominating and the splitting numbers $\mathfrak{b}, \mathfrak{d}, \mathfrak{s}$. Our method allows us to evaluate the reaping and the evasion numbers $\mathfrak{r}$ and  $\mathfrak{e}$.

It is well-known that $\mathfrak{s}, \mathfrak{b} \leq \mathfrak{d}$ and that $\mathfrak{s}$ and $\mathfrak{b}$ are independent. The consistency of $\mathfrak{s} < \mathfrak{b}$ was shown by J. Baumgartner and P. Dordal in 1985 by preserving a strong splitting property. In 1984 S. Shelah showed the consistency of $\aleph_1 = \mathfrak{b} < \mathfrak{s} = \aleph_2$, later generalized by the second author and J. Steprans to an arbitrary regular uncountable $\kappa$. The more general result $\mathfrak{b} = \kappa < \mathfrak{s}= \lambda$ was shown in \cite{brndl/fschr} using the method of a matrix iteration, which was introduced by A. Blass and S. Shelah in 1989 to prove the relative consistency of $\mathfrak{u} < \mathfrak{d}$, where the ultrafilter number $\mathfrak{u}$ denotes the minimal size of a base for a non-principal ultrafilter on $\omega$. The authors of \cite{brndl/fschr} systematized and further developed this method and gave the consistency of $\mathfrak{b} = \mathfrak{a} = \kappa < \mathfrak{s} = \lambda$ (where $\mathfrak{a}$ denotes the minimal size of a maximal almost disjoint family of subsets of $\omega$) and  $\mu < \mathfrak{b} = \kappa < \mathfrak{a} = \mathfrak{s} = \lambda$ above a measurable cardinal $\mu$. The inequality $\mathfrak{s} < \mathfrak{d}$ holds already in Cohen's model. The relative consistency of $\aleph_1 = \mathfrak{b} < \mathfrak{e} = \kappa$ was shown in \cite{brndl/shlh}. The method of non-linear iteration was introduced in \cite{cmmngs} in order to control the general invariants $\mathfrak{b}(\kappa), \mathfrak{d}(\kappa)$ and $\mathfrak{c}(\kappa)$ simultaneously where $\kappa$ is a regular cardinal. Making a large cardinal assumption on $\kappa$, the authors showed in \cite{ofb/vf} the relative consistency of $\mathfrak{s}(\kappa) = \kappa^+ < \mathfrak{b}(\kappa)< \mathfrak{d}(\kappa) < \mathfrak{c}(\kappa)$. The method in the article \cite{ofb/vf} gives also an alternative proof of $\mathfrak{s} = \aleph_1 < \mathfrak{b}< \mathfrak{d} < \mathfrak{c}$ without random forcing.

Here we want to address the question whether $\mathfrak{b} < \mathfrak{s}< \mathfrak{d} < \mathfrak{c}$ is consistent. After revisiting the basic definitions in the second section, we point out in the third that we can use a strict partial order as an index set for a non-linear forcing iteration. Then we give the definition of a non-linear forcing iteration modified for our purposes in this article, define some relations between models of ZFC and prove their preservation. In the last section we achieve the following main result combining the method of matrix iterations, non-linear iterations and adding restricted Hechler reals as in \cite{brndl/fschr}.

\begin{thm*}
	If $\beta, \lambda, \delta, \mu$ are infinite cardinals with $\omega_1 \leq \beta = cf(\beta) \leq \lambda = cf(\lambda) \leq \delta \leq \mu$ and $cf(\mu) > \omega$, then there is a cardinal preserving generic extension where $\beta = \mathfrak{b} \land \lambda = \mathfrak{r} = \mathfrak{s} = \mathfrak{e} \land \delta = \mathfrak{d} \land \mu = \mathfrak{c}$ holds.
\end{thm*}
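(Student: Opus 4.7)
The plan is to construct the desired model as a parallel non-linear iteration of length $\mu$, combining three ingredients that have been developed separately in the literature: the matrix iteration of \cite{brndl/fschr} (which produces the constellation $\mathfrak{b} < \mathfrak{s}$), the non-linear iteration of \cite{cmmngs} and \cite{ofb/vf} (which produces $\mathfrak{s} < \mathfrak{d} < \mathfrak{c}$), and the parallelisation machinery developed in Section~3 of the present paper (which is designed to allow the two to be interleaved without interference). Starting in a ground model satisfying $\GCH$, I would fix standard bookkeeping of length $\mu$ enumerating all nice names for potential witnesses of the four invariants, and design a strict partial order $(I,<_I)$ of cardinality $\mu$ to serve as the index set for the iteration.

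The index $I$ would have three ``dimensions''. Along a sub-order of length $\beta$, restricted Hechler forcing $\bbD_f$ contributes $\leq^*$-mutually unbounded reals, producing an unbounded family of size $\beta$ while keeping $\mathfrak{b}$ from collapsing further; along a sub-order of length $\lambda$, a coherent chain of filters is built up by Mathias--Prikry iterands, producing a reaping/ultrafilter base of size $\lambda$, together with parallel predictor-forcings yielding an evading family of size $\lambda$; along a dominating sub-order of length $\delta$, Hechler-style generics $\leq^*$-dominate the whole extension, producing a dominating family of size $\delta$. A Cohen-type padding, if required, extends the construction out to length $\mu$ so that $2^{\aleph_0}=\mu$ in the final extension.

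The verification of the invariants then splits into upper and lower bounds. The upper bounds $\mathfrak{b}\leq\beta$, $\mathfrak{r}\leq\lambda$, $\mathfrak{e}\leq\lambda$, $\mathfrak{d}\leq\delta$, $\mathfrak{c}\leq\mu$ are direct consequences of the generics added along the corresponding dimensions, combined with the ZFC inequality $\mathfrak{s}\leq\mathfrak{r}$ (which gives $\mathfrak{s}\leq\lambda$). The lower bounds $\mathfrak{b}\geq\beta$, $\mathfrak{s}\geq\lambda$, $\mathfrak{e}\geq\lambda$, $\mathfrak{d}\geq\delta$ should follow from the preservation results proved in Section~3: any purported witness of smaller size lives in a proper sub-part of the parallel non-linear iteration, and the relevant preservation property (non-domination, non-splitting, non-prediction) prohibits it from being a witness in the full extension.

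The main obstacle is verifying that the preservation properties pass through the parallel non-linear structure at limit stages of $(I,<_I)$. When two dimensions meet, one must show that the inverse-limit forcing along cofinal subsets of the index inherits the preservation properties established individually along each dimension. In particular, the matrix-iteration technique (responsible for $\mathfrak{b}<\mathfrak{s}$) and the non-linear technique (responsible for $\mathfrak{s}<\mathfrak{d}$) pull in opposite directions, and the substance of the proof will lie in showing that the parallel indexing introduced in Section~3 makes them compatible. This will hinge on choosing the shape of $I$ very carefully and on a case-by-case check of ccc-type preservation lemmas, separately for each of the four invariants along each type of generic that is being added.
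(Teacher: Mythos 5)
Your overall strategy---combining a matrix-style iteration with a non-linear iteration, proving preservation lemmas, and splitting the verification into upper bounds from added generics and lower bounds from preservation---is the right shape, and you correctly identify that the hard part is pushing the preservation properties through limit stages of the non-linear index. However, the architecture you describe does not match the one the paper uses, and a couple of your steps would fail.

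First, your index set is described as a \emph{single} strict partial order $I$ of size $\mu$ with three ``dimensions'': one of length $\beta$ for unbounded reals, one of length $\lambda$ for reaping/evasion, and one of length $\delta$ for dominating reals. The paper instead has a genuinely two-dimensional parameter space: a \emph{column} of length $\lambda+1$ (the matrix direction, iterating Cohen, Mathias, $\mathbb{E}$, and $\mathbb{P}$ in a mod-$4$ cycle to control $\mathfrak{s},\mathfrak{r},\mathfrak{e}$), and at each level $\alpha\leq\lambda$ of the column a \emph{plane}, namely a non-linear iteration $D(\omega,Q,J^\alpha,\ldots)$ over the same well-founded poset $Q=([\delta]^{<\beta},\subset)$. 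Crucially, $\beta$ and $\delta$ are \emph{not} controlled by separate sub-orders of the index: they are both read off from the order-theoretic invariants $\mathfrak{b}(Q)=\beta$ and $\mathfrak{d}(Q)=\delta$ of the single poset $Q$, via the restricted Hechler reals $f^b_G$ (with $b$ ranging over an unbounded, resp.\ dominating, subset of $Q$). Your ``sub-order of length $\beta$ of restricted Hechler forcings'' would not give $\mathfrak{b}\geq\beta$, because nothing prevents the later dominating Hechlers from bounding it; the non-domination argument really needs the combinatorics of $Q$ together with the bookkeeping $F$ controlling the restriction set $\dot{H}^\alpha_a=V^{P_{F(a),a}}\cap{}^\omega\omega$.

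Second, you invoke ``the ZFC inequality $\mathfrak{s}\leq\mathfrak{r}$'' to get $\mathfrak{s}\leq\lambda$. There is no such ZFC inequality: $\mathfrak{s}$ and $\mathfrak{r}$ are dual invariants and each of $\mathfrak{s}<\mathfrak{r}$ and $\mathfrak{r}<\mathfrak{s}$ is consistent. In the paper the bound $\mathfrak{s}\leq\lambda$ is obtained directly: the Cohen reals $c_\alpha$ added along the column form a splitting family in $V_{\lambda,m}$ because of the preserved $\clubsuit$-property and the absorption Lemma~\ref{abs}. Finally, $\mathfrak{c}=\mu$ is handled in the paper not by appending Cohen padding to a $\mu$-length iteration but by enlarging the non-linear index poset $Q$ to an ordinal sum $R=\mu\oplus Q$, which forces $\mu$ further Cohen reals while leaving $\mathfrak{b}(R),\mathfrak{d}(R)$ unchanged; your ``Cohen padding out to length $\mu$'' would need a justification that it does not disturb $\mathfrak{s},\mathfrak{r},\mathfrak{e},\mathfrak{b},\mathfrak{d}$, which the paper gets for free from the structure of $R$.
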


Using suitable forcing notions we evaluated also $\mathfrak{r}$ and $\mathfrak{e}$ in our model, establishing the consistency of $\mathfrak{b} < \mathfrak{e}< \mathfrak{d} < \mathfrak{c}$ and $\mathfrak{b} < \mathfrak{r}< \mathfrak{d} < \mathfrak{c}$. The constellation $\mathfrak{b} < \mathfrak{u} < \mathfrak{d} < \mathfrak{c}$ follows by a known result. Note that this refines also the above mentioned consistency of $\aleph_1 = \mathfrak{b} < \mathfrak{e}$.

\section{Preliminaries}



\begin{defi}
	Let $f$ and $g$ be functions from $\omega$ to $\omega$, i.e. $f, g \in {}^{\omega}\omega$.
	
	\begin{enumerate}
		
		\item Then $g$ eventually dominates $f$, denoted by $f <^* g$, if $\exists n < \omega$ $\forall m > n$ $f(m) < g(m)$.
		
		\item A family $\mathcal{F} \subseteq {}^{\omega}\omega$, is dominating if $\forall g \in {}^{\omega}\omega $ $\exists f \in \mathcal{F}$ such that $g<^*f$.
		
		\item A family $\mathcal{F} \subseteq {}^{\omega}\omega$ is unbounded if $\forall g \in {}^{\omega}\omega$ $\exists f \in \mathcal{F}$ such that $f \not<^*g$.
		
		\item $\mathfrak{b}$ and $\mathfrak{d}$ denote the generalized bounding and dominating numbers respectively:
		
		\begin{center}
			$\mathfrak{b} = min \{ \vert \mathcal{F}\vert : \mathcal{F} \subseteq {}^{\omega}\omega, \mathcal{F}$ is unbounded$\}$,
			
			$\mathfrak{d} = min \{ \vert \mathcal{F}\vert : \mathcal{F} \subseteq {}^{\omega}\omega, \mathcal{F}$ is dominating$\}$.
		\end{center}
		
		\item Finally $\mathfrak{c}= 2^\omega$.
	\end{enumerate}
\end{defi}






\begin{defi}
	Let $x, y  \in [\omega]^\omega$.
	\begin{enumerate}
		
		\item The set $x$ splits $y$ if $\vert x \cap y\vert = |y \setminus x| = \aleph_0$. 
		
		\item A family $\mathcal{S} \subseteq [\omega]^\omega$ is splitting if $\forall a \in [\omega]^\omega \exists s \in \mathcal{S}: s$ splits $a$.
		
		\item A family $\mathcal{R} \subseteq [\omega]^\omega$ is reaping if it is not split by a single real.
		
		\item The splitting number $\mathfrak{s}$ is the minimal size of a splitting family, and the reaping number $\mathfrak{r}$ is the minimal size of a reaping family:
		
		\begin{center}
			$\mathfrak{s} = min\{|\mathcal{S}|: \mathcal{A} \subseteq [\omega]^\omega, \mathcal{A}$ is splitting$\}.$
			
			$\mathfrak{r} = min\{|\mathcal{R}|: \mathcal{A} \subseteq [\omega]^\omega, \mathcal{A}$ is reaping$\}.$
		\end{center}
	\end{enumerate}
\end{defi}

\begin{defi}
	Let $D \in [\omega]^\omega$ and $f \in \omega^\omega$:
	\begin{enumerate}
		
		\item A pair $\pi = (D, \langle \pi_n: n \in D \rangle)$ where each $\pi_n: {}^n \omega \to \omega$ is called a predictor. 
		
		\item A predictor $\pi = (D, \langle \pi_n: n \in D \rangle)$ predicts $f$ if $\{n \in D: \pi_n(f\upharpoonright n) = f(n)\}$ is cofinite. Otherwise we say that $f$ evades $\pi$.
		
		\item The evasion number $\mathfrak{e}$ is the minimal size of a family of reals, which are not predicted by a single predictor.
	\end{enumerate}
\end{defi}

\begin{defi}
Let $\mathcal{F}$ be a non-principial ultrafilter on $\omega$.

\begin{enumerate}
  \item A subset $\mathcal{G} \subseteq \mathcal{F}$ is a base for $\mathcal{F}$ if for every $F \in \mathcal{F}$ there is a $G \in \mathcal{G}$ such that $G \subseteq F$. In this case we say that $\mathcal{G}$ generates $\mathcal{F}$.

\item The ultrafilter number is defined as follows: $\mathfrak{u} = min\{\vert \mathcal{G}\vert: \mathcal{G}$ generates a non-principial ultrafilter$\}$.
\end{enumerate}
\end{defi}

Some of the relations between the above mentioned invariants are as follows: $\aleph_1 \leq \mathfrak{b} = cf(\mathfrak{b}) \leq cf(\mathfrak{d}) \leq \mathfrak{d} \leq \mathfrak{c}$, $\mathfrak{b} \leq \mathfrak{r} \leq \mathfrak{u}$, $\mathfrak{s} \leq \mathfrak{d}$, $cf(\mathfrak{c}) > \omega$, $\mathfrak{e} \leq \mathfrak{d}$ and $\mathfrak{s}$ and $\mathfrak{b}$ are independent (see e.g. \cite{blass} for more).

\begin{defi}\cite{brndl/shlh}\label{PR}
	$\mathbb{P}$ consists of triples $(d, \pi, F)$ where $d \in {}^{<\omega}2$, $\pi = \langle \pi_n: n \in d^{-1}(\{1\}) \rangle$ and $\pi_n: {}^n\omega \to \omega$ and $F \in [\omega^\omega]^{<\aleph_0}$ with $\forall f, g \in F~ [f \not= g \to~ min\{n: f(n) \not= g(n)\}< |d|]$.
	
	$(d', \pi', F') \leq (d, \pi, F)$ iff $d' \supseteq d$, $\pi' \supseteq \pi$, $F' \supseteq F$ and  $\forall f \in F~ \forall n \in (d')^{-1}(\{1\})\setminus d^{-1}(\{1\})~[\pi'_n(f\upharpoonright n) = f(n)]$.
\end{defi}

$\mathbb{P}$ is $\sigma$-centered, hence has the c.c.c.. $\mathbb{P}$ adds a predictor, which predicts every ground model real.

\begin{defi}\label{MAT} Let $U$ be an ultrafilter on $\omega$. The Mathias forcing with respect to $U$, denoted $\mathbb{M}_U$, consists of pairs $(a, A)  \in [\omega]^\omega \times U$ such that $max(a) < min(A)$ with the following extension relation: $(a, A) \leq (b, B)$ iff $b \subseteq a$, $a \backslash b \subseteq B$ and $A \subseteq B$.
\end{defi}

The $\mathbb{M}_U$ has the c.c.c. and adds an unsplit real.

\begin{defi}\label{EV}
	$\mathbb{E}$ consists of pairs $(s, F)$ where $s \in {}^{<\omega}\omega$, $F = \{\pi_D: \pi_D = (D, \langle \pi^D_i: i \in D\rangle)$ is a predictor$\}$ and $|F| < \aleph_0$.
	
	$(t, G) \leq (s, F)$ iff $t \supseteq s$, $G \supseteq F$ and $\forall i \in |t| \backslash |s| ~ \forall \pi_D \in F~ [i \in D \to t(i) \not= \pi^D_i(t\upharpoonright i)]$.
\end{defi}
$\mathbb{E}$ is $\sigma$-centered, hence has the c.c.c.. $\mathbb{E}$ adds a function from $\omega$ to $\omega$, which is not predicted (evaded) by any ground model predictor.

\begin{defi}\label{def_H}
	The Hechler forcing notion is defined as the set $\mathbb{H} = \{(s,f): s \in \omega^{<\omega}, f \in {}^\omega \omega\}$. The extension relation is given by:
	
	\centerline{$(t,g) \leq_{\mathbb{H}} (s,f)$ iff $s \subseteq t \land \forall n \in \omega ~[g(n)\geq f(n)] \land \forall i \in dom(t)\setminus dom(s)~ [t(i) > f(i)]$.}
	
	If $A$ is a collection of reals then $\mathbb{H}(A) = \{(s,f): s \in \omega^{<\omega}, f \in A\}$, equipped with the same extension relation, is the restriction of the Hechler forcing to $A$.
\end{defi}

Clearly the later notion adds a real dominating only the elements in $A$.

\begin{defi}\label{compl.em}
	If $(Q, \leq_Q, 1_Q) $ and $(P, \leq_P, 1_P)$ are forcing posets, then $i : Q \to P$ is called a complete embedding if:
	
	\begin{enumerate}
		\item $i(1_Q) = 1_P$
		
		\item $\forall q, q' \in Q [q \leq_Q q' \to i(q) \leq_P i(q')]$
		
		\item $\forall q, q' \in Q [q \perp_Q q' \leftrightarrow i(q) \perp_P i(q')]$
		
		\item if $A \subseteq Q$ is a maximal antichain in $Q$, then $i(A)$ is a such in $P$.
	\end{enumerate}

	
\end{defi}

\begin{defi}
	Let $P, Q$ be forcing posets and $i: Q \to P$ satisfy properties (1), (2), (3) of Definition \ref{compl.em}. Then for an element $p \in P$, $p' \in Q$ is called a reduction of $p$ to $Q$ iff $\forall q \in Q [i(q) \perp p \to q \perp p']$.
\end{defi}
A proof of the next lemma can be found in \cite{kun}.
\begin{lem} \label{1.5}
	If $i: Q \to P$ satisfies (1),(2),(3) of Definition \ref{compl.em}, then $i$ is a complete embedding iff $\forall p \in P$ there is a reduction of $p$ to $Q$.
\end{lem}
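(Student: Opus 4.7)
The plan is to prove both directions of the biconditional using density and maximal-antichain arguments, relying only on conditions (2), (3), (4) of Definition \ref{compl.em}.

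For $(\Leftarrow)$, assume every $p \in P$ admits a reduction to $Q$, and verify condition (4). Given a maximal antichain $A \subseteq Q$, property (3) already makes $i(A)$ an antichain in $P$, so only maximality needs checking. Fix $p \in P$ and let $p' \in Q$ be a reduction of $p$. By maximality of $A$ in $Q$, pick $a \in A$ compatible with $p'$, witnessed by some $q \leq p', a$. Since $q \compat p'$, the contrapositive of the defining property of a reduction forces $i(q) \compat p$; and $q \leq a$ together with (2) yields $i(q) \leq i(a)$, so $i(a) \compat p$, establishing the maximality of $i(A)$.

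For $(\Rightarrow)$, suppose $i$ is a complete embedding and fix $p \in P$; argue by contradiction, assuming $p$ has no reduction in $Q$. Unpacked, this means: for every $p' \in Q$ there is $q \in Q$ with $i(q) \perp p$ yet $q \compat p'$. Taking a common extension $q_0 \leq p', q$ and using (2) to see $i(q_0) \leq i(q)$, downward propagation of incompatibility gives $i(q_0) \perp p$. Thus $X = \{q \in Q : i(q) \perp p\}$ is dense in $Q$. Let $A$ be a maximal antichain inside $X$. Density of $X$ upgrades this to maximality in $Q$: any $r \in Q$ has an extension $r_0 \in X$, and maximality of $A$ in $X$ supplies some $a \in A$ compatible with $r_0$, hence with $r$. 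Now condition (4) yields that $i(A)$ is a maximal antichain of $P$, so some $i(a)$ is compatible with $p$; but $a \in X$ says precisely $i(a) \perp p$, a contradiction.

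The only delicate step is the density upgrade in $(\Rightarrow)$: the failure of a reduction has to be converted into a genuinely \emph{dense} subset $X \subseteq Q$ whose image under $i$ lies entirely in the incompatibility set of $p$, and that density then has to be leveraged to promote a maximal antichain of $X$ to a maximal antichain of $Q$ so that condition (4) can be invoked. Everything else is a routine translation between the reduction property and its contrapositive together with monotonicity of $i$.
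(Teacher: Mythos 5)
The paper itself gives no proof of Lemma~\ref{1.5}; it simply refers the reader to Kunen's \emph{Set Theory}. Your argument is correct and is essentially the standard textbook proof: for the backward direction you use a reduction $p'$ and the maximality of $A$ in $Q$ to produce some $i(a)$ compatible with $p$, and for the forward direction you negate the reduction property to obtain the dense set $X=\{q\in Q: i(q)\perp p\}$, refine a maximal antichain of $X$ to a maximal antichain of $Q$, and then contradict condition~(4). Both steps are sound; the density upgrade is handled correctly, including the implicit use that a dense subset of a nonempty poset is nonempty so that $A\neq\emptyset$.
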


\section{Generalized bounding and dominating for strict partial orders}

Let $P$ be a set. Let $P_\sqsubset =(P, \sqsubset)$ be a strict partial order (transitive, irreflexive) and let $P_ \sqsubseteq = (P, \sqsubseteq)$ be a partial order (transitive, reflexive, anti-symmetric). $\mathfrak{b}(P_ \sqsubseteq)$ and $\mathfrak{d}(P_ \sqsubseteq)$ are defined in \cite{cmmngs} for partial orders.

\begin{defi}
	1) We call $U \subseteq P$ unbounded if it is not dominated (bounded) by a single element in $P$, i.e. $\forall p \in P \exists q \in U q \not\sqsubset p$. Define $\mathfrak{b}(P_\sqsubset)$ to be the least size of an unbounded family of $P$.
	
	2) A subset $D$ of $P$ is dominating if every element in $P$ is dominated (bounded) by some element in $D$, i.e. $\forall p \in P \exists q \in D p \sqsubset q$. Define $\mathfrak{d}(P_\sqsubset)$ to be the least size of a dominating subset of $P$.
\end{defi}

\begin{rem} If $P_\sqsubset =(P, \sqsubset)$ and $P_ \sqsubseteq = (P, \sqsubseteq)$ are as above, then
	\begin{itemize}
		
		\item If $\forall p, q \in P~[p\sqsubset q \to p \sqsubseteq q]$, then $\mathfrak{d}(P_\sqsubset) \geq \mathfrak{d}(P_\sqsubseteq)$.
		
		\item If $\forall p \in P \exists q \in P ~ [p \sqsubset q]$ and $\forall p, q, r \in P~[p\sqsubseteq q \sqsubset r \to p \sqsubset r]$, then $\mathfrak{d}(P_\sqsubset) \leq \mathfrak{d}(P_\sqsubseteq)$.
		
		\item If $\forall p, q \in P~ [p \not\sqsubseteq q \to p \not\sqsubset q]$, then $\mathfrak{b}(P_\sqsubset) \leq \mathfrak{b}(P_\sqsubseteq)$.
		
		\item If $\forall p, q \in P \big[ [p \not\sqsubset q] \to \exists r \in P ~ [p \sqsubset r \not \sqsubseteq q] \big]$, then $\mathfrak{b}(P_\sqsubset) \geq \mathfrak{b}(P_\sqsubseteq)$.
		
	\end{itemize}
\end{rem}

\begin{proof}
	We prove the the first two items. For the first let $D$ be a witness for $\mathfrak{d}(P_\sqsubset)$ and let $p \in P$. Because $D$ is dominating, $\exists q \in D~ p \sqsubset q$. By assumption $p\sqsubset q \to p \sqsubseteq q$. So $D$ is a witness for $\mathfrak{d}(P_\sqsubseteq)$.
	
	For the second let $D$ be a witness for $\mathfrak{d}(P_\sqsubseteq)$ and let $p \in P$. Because $D$ is dominating, $\exists q \in D~ p \sqsubseteq q$. By assumption $\exists q' \in P~ q \sqsubset q'$. By assumption $p \sqsubseteq q \sqsubset q' \to p \sqsubset q'$. So $\{q': q \in D\}$ is a witness for $\mathfrak{d}(P_\sqsubset)$ of the same size.
\end{proof}

\begin{cor}
	Let $\beta, \delta $ be infinite cardinals such that $cf(\beta) = \beta \leq cf(\delta)$. Let $P = ([\delta]^{< \beta}, \subseteq)$ and $Q = ([\delta]^{< \beta}, \subset)$. Then $\beta = \mathfrak{b}(P) = \mathfrak{b}(Q)$ and $\delta = \mathfrak{d}(P) = \mathfrak{d}(Q)$. Further for any regular $\kappa$ we have $\mathfrak{b}(({}^\kappa \kappa, <^*)) = \mathfrak{b}(({}^\kappa \kappa, \leq^*))$ and $\mathfrak{d}(({}^\kappa \kappa, <^*)) = \mathfrak{d}(({}^\kappa \kappa, \leq^*))$
\end{cor}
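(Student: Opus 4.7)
The plan is to apply the four bullets of the preceding Remark to transfer $\mathfrak{b}$ and $\mathfrak{d}$ between the strict and non-strict versions in both settings, and then to pin down the actual cardinal values.

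For $P = ([\delta]^{<\beta}, \subseteq)$ and $Q = ([\delta]^{<\beta}, \subset)$ (with $\subset$ denoting proper containment): bullets (1) and (3) are immediate from the fact that $\subset$ refines $\subseteq$. For bullet (2), every $p \in [\delta]^{<\beta}$ admits a proper extension, because $|p| < \beta \leq \delta$ supplies some $x \in \delta \setminus p$, giving $p \subset p \cup \{x\} \in [\delta]^{<\beta}$; the transitivity $p \subseteq q \subset r \Rightarrow p \subset r$ is routine. Bullet (4) is the key step: given $p \not\subset q$, either $p \not\subseteq q$ -- in which case $r := p \cup \{x\}$ for any $x \in \delta \setminus p$ strictly extends $p$ while still inheriting an element of $p$ outside $q$, so $r \not\subseteq q$ -- or $p = q$, in which case the same $r$ contains the new point $x \notin q$. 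This forces $\mathfrak{b}(P) = \mathfrak{b}(Q)$ and $\mathfrak{d}(P) = \mathfrak{d}(Q)$. To evaluate these, observe that $\{\{\alpha\} : \alpha < \beta\}$ is unbounded of size $\beta$ (any $p$ of size $<\beta$ misses some $\alpha < \beta$), while $\cf(\beta) = \beta$ forces every family of size $<\beta$ to have a bound (namely its union), yielding $\mathfrak{b}(P) = \beta$. The $\delta$ many singletons then give $\mathfrak{d}(P) \geq \delta$, and the condition $\beta \leq \cf(\delta)$, which ensures $\sup(p) < \delta$ for every $p \in [\delta]^{<\beta}$, produces a dominating family of size $\delta$.

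The ${}^\kappa\kappa$ case proceeds identically, with pointwise $f \mapsto f+1$ playing the role of $p \mapsto p \cup \{x\}$. Bullets (1)--(3) are immediate. For bullet (4), when $f \not<^* g$ the set $S = \{\alpha < \kappa : f(\alpha) \geq g(\alpha)\}$ is unbounded in $\kappa$ by regularity, and $r := f+1$ satisfies $f <^* r$ while $r(\alpha) > g(\alpha)$ on $S$, so $r \not\leq^* g$.

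The only real subtlety is bullet (4): one has to produce an explicit witness $r$ that strictly dominates $p$ (resp.\ $f$) yet avoids being bounded by $q$ (resp.\ $g$). In both settings the combinatorial hypotheses -- $|p| < \beta \leq \delta$ in the first, and regularity of $\kappa$ in the second -- provide exactly what is needed, and in each case a one-step extension of $p$ or $f$ suffices.
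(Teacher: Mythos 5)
Your use of the Remark's four bullets to pass between the strict and non-strict orders is the intended route, and the bullet verifications themselves are correct; the one-step-extension witnesses $p\mapsto p\cup\{x\}$ and $f\mapsto f+1$ are exactly what bullet (4) needs in the two settings, and your case split $p\not\subseteq q$ versus $p=q$ for $p\not\subset q$ is fine. The evaluations $\mathfrak{b}(P)=\beta$ are also fine (singletons for the upper bound, unions for the lower bound using regularity of $\beta$).

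The step that does not hold up is the upper bound $\mathfrak{d}(P)\leq\delta$: ``the condition $\beta\leq\cf(\delta)$, which ensures $\sup(p)<\delta$ for every $p\in[\delta]^{<\beta}$, produces a dominating family of size $\delta$'' names no family. That each $p$ is bounded only shows $[\delta]^{<\beta}=\bigcup_{\alpha<\delta}[\alpha]^{<\beta}$; to get cofinality $\leq\delta$ you would still need, for every $\alpha<\delta$, a $\subseteq$-cofinal subfamily of $[\alpha]^{<\beta}$ of size $\leq|\alpha|$, and the obvious recursion fails at ordinals $\alpha<\delta$ that are singular of cofinality $<\beta$, where $\cf([\alpha]^{<\beta},\subseteq)$ is controlled by covering numbers rather than by $|\alpha|$ alone (for instance $\cf([\aleph_\omega]^{\aleph_0},\subseteq)$ can consistently exceed $\aleph_{\omega+1}$). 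The initial segments of $\delta$ do not help either, since $\{\xi:\xi<\gamma\}$ has size $|\gamma|$, which need not be $<\beta$. So the bound $\mathfrak{d}([\delta]^{<\beta},\subseteq)\leq\delta$ needs an additional arithmetic hypothesis such as $\delta^{<\beta}=\delta$ (e.g.\ a $\GCH$ ground model); the paper in effect inherits the values of $\mathfrak{b}(P)$ and $\mathfrak{d}(P)$ for this template poset from \cite{cmmngs} rather than re-deriving them, and the point of the Corollary is only the transfer to the strict order. As a smaller remark, your lower bound ``the $\delta$ many singletons give $\mathfrak{d}(P)\geq\delta$'' is correct but needs a brief case split: if $\beta<\delta$ then a dominating $D$ must have $\bigcup D=\delta$ and $|\bigcup D|\leq|D|\cdot\beta$ forces $|D|\geq\delta$; if $\beta=\delta$ then $\delta$ is regular and a union of fewer than $\delta$ sets of size $<\delta$ cannot be all of $\delta$.
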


Note that $Q$ is well-founded, so applying Theorem 1 and Theorem 2 in \cite{cmmngs} with the strict poset $Q$ we can obtain a model where $\mu = \mathfrak{c} \geq \delta = \mathfrak{d} \geq \beta = \mathfrak{b}$ where $\beta, \delta$ and $\mu$ are arbitrary uncountable cardinals with $cf(\mu) > \omega$, $cf(\beta) = \beta \leq cf(\delta)$, $\delta \leq \mu$. Further note that $\forall a \in Q\linebreak rk_Q(a) = type(a, \in)$.

\section{Preservation Theorems}



We start by giving a slightly modified definition of a non-linear forcing iteration $D(\omega, Q)$ from \cite{cmmngs}:

\begin{defi}\label{def_CS}
	Let $Q$ be a well-founded (strict) partial order such that $\aleph_1 \leq \mathfrak{b}(Q)$.
	Extend $Q$ to a partial order $Q'=Q\cup\{m\}$ with a maximal element $m$. Let $J \subseteq Q$ be fixed. For an arbitrary element $e \in Q'$ we denote the traces $Q'_e = \{c \in Q': c <_{Q'} e\}$ and $J_e = Q'_e \cap J$.  
	Recursively on $Q'$, define for each $a \in Q'$ a forcing notion $P_a$ as follows:
	\begin{itemize}
		\item  Fix $a\in Q'$ and suppose for each $b <_{Q'} a$ the poset $P_b$ has been defined and if $b\in Q'_a \backslash J$ then additionally we are given {\emph{a (nicely definable) $P_b$-name $\dot{H}_b$ for a family of functions in $^\omega\omega$.}}\footnote{Alternatively, we might require that we are given a $P_b$-name for a Suslin partial order $S_b$.}
		Then $P_a$ consists of functions $p$ such that $dom(p) \subseteq Q'_a$ and $|dom(p)| < \aleph_0$. Further $p$ maps each point of its domain either to a condition in the trivial forcing or to a condition in a restricted Hechler forcing depending on whether this point is in $J$ or not. That is for each $b \in dom(p)$ 
		
		$$p(b) = \begin{cases}
		\mathds{1} & b \in J \\
		(t, \dot{f})\in \mathbb{H}(\dot{H}_b) & b \not\in J
		\end{cases}$$
		Here $\mathbb{H}(\dot{H}_b)$ is a $P_b$-name  for the restricted Hechler forcing notion as above.\footnote{Alternatively, in the second case we might require that $p\upharpoonright b\Vdash_{P_b} p(b)\in \dot{S}_b$.} 
	\end{itemize}
	
	The extension relation of $P_a$ is defined as follows: if $p, q \in P_a$, then $p \leq q$ iff
	
	\begin{itemize}
		\item $dom(q) \subseteq dom(p)$ and
		
		\item $\forall b \in dom(q) \backslash J~[ p \upharpoonright b \Vdash_{P_b} p(b) \leq_{\mathbb{H}(\dot{H}_b)} q(b)]$\footnote{Alternatively $ p \upharpoonright b \Vdash_{P_b} p(b) \leq_{\mathbb{S}_b} q(b)$}, where $p \upharpoonright b = p \upharpoonright Q'_b$.
	\end{itemize}
	Finally, let $D(\omega, Q, J, \{\dot{H}_a\}_{a\in Q\backslash J})= P_m$.
\end{defi}

\begin{rem}

$\bullet$ To make the distinction between the poset $Q$ and the forcing notion \linebreak $D(\omega, Q, J, \{\dot{H}_a\}_{a\in Q\backslash J})$ clear, we refer to  the former as an index set (this will be one major function of the partial order $Q$ in our system of parallel non-linear iterations) and to the latter as a forcing notion.	

$\bullet$ In our intended application in the next section, for each $\alpha \leq \lambda$, the sets $J_\alpha$ and the family of names $\{\dot{H}^\alpha_a\}_{a\in Q\backslash J_\alpha}$ are decided by a bookkeeping function $F:Q\to\lambda$.
\end{rem}

We modify Lemma 10 in \cite{brndl/fschr} to non-linear iteration.

\begin{lem}\label{bs10}
	Let $R_0$, $R_1$ be c.c.c. partial orders such that $R_0\lessdot R_1$, $(Q, <_Q )= ([\delta]^{<\beta}, \subset)$ and $J^1\subseteq J^0 \subseteq Q$ be fixed. For each $l \in \{0,1\}$ and $c \in Q'$ let $T_{l,c}=D(\omega, Q_c , J^l_c, \{\dot{H}^l_a\}_{a\in Q_c\backslash J^l})$ be in $V^{R_l}$ for given families $\{\dot{H}^0_a\}_{a\in Q \backslash J^0}$ and $\{\dot{H}^1_a\}_{a\in Q \backslash J^1}$ of reals and $P_{l,c}=R_l*\dot{T}_{l,c}$.  Suppose $b \in Q'$ such that $rk_{Q'}(b) = \alpha$ is a limit ordinal and for each $a \in Q_b$, $P_{0, a}$ is a complete suborder of $P_{1, a}$ (denoted by $P_{0, a} \lessdot P_{1, a}$).
Then $P_{0, b} \lessdot P_{1, b}$.
\end{lem}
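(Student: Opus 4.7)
The plan is to verify the four clauses of Definition \ref{compl.em}, establishing the reduction property via Lemma \ref{1.5}, using as inductive hypothesis that $P_{0,a}\lessdot P_{1,a}$ for every $a<_{Q'}b$. Clauses (1)--(3) are coordinatewise bookkeeping: the natural map $P_{0,b}\to P_{1,b}$ sends a condition to the one obtained by pushing forward the $R_0$-coordinate via $i\colon R_0\to R_1$ and translating each $P_{0,c}$-name ($c<_{Q'}b$) in a value at coordinate $c$ by the given $P_{0,c}\lessdot P_{1,c}$. Order and incompatibility are then checked coordinatewise; the inclusion $J^1\subseteq J^0$ ensures that every coordinate that is trivial in $P_{0,b}$ either remains trivial in $P_{1,b}$ or may simply be dropped from the support.

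The substantive content is the reduction property. Fix $p\in P_{1,b}$. Using the finite-support nature of the iteration, I may replace $p$ by an extension $p'\leq p$ in the dense subset of conditions whose $R_1$-component decides the domain of the $\dot T_{1,b}$-component to equal a concrete finite $D\subseteq Q'_b$; any reduction of $p'$ is automatically a reduction of $p$, since $q\compat p'$ implies $q\compat p$. Because $\alpha=\mathrm{rk}_{Q'}(b)$ is a limit ordinal, $Q'_b$ is upward directed below $b$: given finitely many $c_1,\dots,c_n\in Q'_b$, the union $\bigcup_i c_i$ lies in $[\delta]^{<\beta}$ (as $\beta=\cf(\beta)\geq\aleph_1$), and if $b\in Q$ this union can be enlarged inside $b$ to a proper subset of $b$ of strictly larger order type than each $\mathrm{type}(c_i,\in)$ (using the limitness of $\mathrm{type}(b,\in)$), while if $b=m$ any strict superset in $[\delta]^{<\beta}$ does. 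Thus there is $a<_{Q'}b$ with $D\subseteq Q'_a$, so $p'$ is already a condition of $P_{1,a}$.

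By the hypothesis $P_{0,a}\lessdot P_{1,a}$, pick a reduction $p^*\in P_{0,a}$ of $p'$. I claim that $p^*$, viewed inside $P_{0,b}$ via the natural inclusion $P_{0,a}\subseteq P_{0,b}$, is still a reduction of $p'$ to $P_{0,b}$. Let $q\in P_{0,b}$ be compatible with $p^*$, witnessed by $q''\leq q,p^*$. Restricting to $Q'_a$ gives $q''\restr Q'_a\leq q\restr Q'_a,\,p^*$ in $P_{0,a}$, so $q\restr Q'_a$ is compatible with $p^*$ there. The reduction property at level $a$ yields a common extension $q^\dagger\in P_{1,a}$ of $i(q\restr Q'_a)$ and $p'$. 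Now amalgamate:
$$q^\ddagger \;:=\; q^\dagger\,\cup\,\bigl(i(q)\restr (Q'_b\setminus Q'_a)\bigr).$$
The two pieces have disjoint supports, so $q^\ddagger$ is a well-defined condition of $P_{1,b}$; at each coordinate $c\in Q'_b\setminus Q'_a$ the required relation $q^\ddagger\restr c\Vdash q^\ddagger(c)\leq i(q)(c)$ is trivially satisfied (equality), and at coordinates $c\in Q'_a$ it follows from $q^\dagger\leq p',i(q\restr Q'_a)$ together with the observation $Q'_c\subseteq Q'_a$. Since $p'$ has support in $Q'_a$, we obtain $q^\ddagger\leq p'$; and $q^\ddagger\leq i(q)$ by construction. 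Hence $p'$ and $i(q)$ are compatible in $P_{1,b}$, which is the contrapositive form of the reduction property.

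The main obstacle is the combinatorial step of locating $a<_{Q'}b$ with $D\subseteq Q'_a$; this is precisely where the limit-rank hypothesis on $\alpha$, the assumption $\beta=\cf(\beta)\geq\aleph_1$, and the concrete structure of $Q=([\delta]^{<\beta},\subset)$ must be combined. Once $a$ has been selected the amalgamation is formal, and the verification of clauses (1)--(3) is routine coordinatewise bookkeeping on finite-support conditions.
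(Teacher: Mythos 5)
Your proof follows the same strategy as the paper's: verify Definition~\ref{compl.em}(1)--(3) coordinatewise using the finiteness of supports, then for the reduction property, locate the finite-support condition $p\in P_{1,b}$ inside $P_{1,a}$ for some $a<_{Q'}b$, apply the inductive hypothesis $P_{0,a}\lessdot P_{1,a}$ to obtain a reduction there, and amalgamate with the part of the given condition living above $Q'_a$. Your $q^\ddagger=q^\dagger\cup\bigl(i(q)\restr(Q'_b\setminus Q'_a)\bigr)$ is essentially the paper's $r'_0\cup r_1$.

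The one place that deserves scrutiny is the combinatorial step you yourself single out as the crux: that any finite $D\subseteq Q'_b$ is contained in $Q'_a$ for some $a<_{Q'}b$. Your justification --- enlarge $\bigcup D$ to a proper subset of $b$ of order type strictly larger than each $\mathrm{type}(c_i,\in)$ --- does not go through as stated. If $\bigcup D=b$ (e.g.\ $b$ of order type $\omega$ and $D=\{\text{evens},\text{odds}\}$) there is no proper enlargement inside $b$; and even when $\bigcup D\subsetneq b$, a proper subset of $b$ with order type strictly larger than that of a given $c\subsetneq b$ need not exist (take $c$ cofinite in $b$, so $\mathrm{type}(c,\in)=\mathrm{type}(b,\in)$). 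Thus $Q_b$ need not be directed for $b\in Q$ of infinite order type when $<_Q$ is literally $\subsetneq$ on $[\delta]^{<\beta}$; note also that this order is not well-founded ($\omega\supsetneq\omega\setminus\{0\}\supsetneq\cdots$), in tension with the paper's assumptions. The paper's own proof has exactly this same gap (its assertion that the chain $B_k=\bigcup_{i\leq k}b_i$ satisfies $p\in P_{1,B_j}$ for some $j$ fails for the same $D$), so this is a shared issue rather than one you introduce; both arguments go through if $<_Q$ is read as ``proper initial segment of,'' for which $Q_b$ is a well-ordered chain cofinal in $Q_b$.
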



\begin{proof}
	Let $\{b_i: i \in \alpha\}$ be a partition of $b$. Define $B_k = \bigcup\limits_{i \leq k}b_i$ for $k \in \alpha$; so $B_k <_Q b$ and $rk_{Q'}(B_k) < rk_{Q'}(b)$ for each $k \in \alpha$. By the finiteness of the supports and the assumption $P_{0, a} \lessdot P_{1, a}$ for all $a <_Q b$ it is clear that properties (1), (2), (3) of Definition \ref{compl.em} are satisfied. To finish the proof we find a reduction for every element. For this let $p \in P_{1, b}$ be arbitrary. Because $dom(p)$ is finite we have that $p \in P_{1, B_j}$ for some $j \in \alpha$. Since by assumption $P_{0, B_j} \lessdot P_{1, B_j}$ we can find a reduction $q \in P_{0, B_j}$  of $p$ and show that $q$ is also a reduction in $P_{0, b}$. So let $P_{0, b} \ni q' \not \perp q$ and let $P_{0, b} \ni r \leq q, q'$. Then write $r = r_0 \cup r_1$ while $r_0 \in P_{0, B_j}$ and $dom(r_1) \subseteq Q_b \setminus Q_{B_j}$. By $r \leq q$ we have $r_0 \leq q$ and because $q$ is a reduction of $p$ in $P_{0, B_j}$ there exists $r'_0 \in P_{0, B_j}$ such that $r'_0 \leq p,~ r_0$. Then $r'_0 \cup r_1 \leq r \leq q$ and $r'_0 \cup r_1 \leq p$, so we are done.
\end{proof}




\begin{defi}
	\begin{enumerate} Let $M$ and $N$ be models of set theory with $M \subseteq N$. 
		
		\item Let $s \in N \cap [\omega]^\omega$ be such that for each $a \in M \cap [\omega]^\omega$ we have $N \vDash |s \cap a| = \aleph_0 = |a \setminus s|$. Then $\clubsuit(M, N, s)$ holds.
		
		\item Let $u \in N \cap [\omega]^\omega$ such that for each $a \in M \cap [\omega]^\omega$ we have $N \vDash u \subseteq^* a$ or $u \subseteq^* (\omega\setminus a)$. Then $\blacklozenge(M, N, u)$ holds.
		
		\item Let $f \in N \cap \omega^\omega$ be such that for each predictor $\pi \in M$ we have $N \vDash f$ evades $\pi$. Then we say  that $\spadesuit(M, N, f)$ holds.
		
		\item Let $\pi \in N$ be a predictor such that for each $f \in M \cap \omega^\omega$, $N \vDash \pi$ predicts $f$. Then we say that $\heartsuit(M, N, \pi) $ holds.
	\end{enumerate}
	
\end{defi}

Let $\bullet \in \{ \clubsuit, \blacklozenge, \spadesuit, \heartsuit\}$. We need an analogous to Lemma 11 in \cite{brndl/fschr} for the $\bullet$-relation.

\begin{lem}\label{bs12}
	Let $M \subseteq N$ be models of ZFC, $P \in M$ a forcing poset such that $P \subseteq M$, $G$ a $P$-generic filter over the greater model $N$ (hence also $P$-generic over $M$). Then the following properties hold:
	
	(1) Whenever $s \in N  \cap [\omega]^\omega$ and $\clubsuit(M, N, s)$ is true, then $\clubsuit(M[G], N[G], s)$ holds as well. 
	
	(2) Whenever $u \in N  \cap [\omega]^\omega$ and $\blacklozenge(M, N, u)$ is true, then $\blacklozenge(M[G], N[G], u)$ holds as well. 
	
	(3) Whenever $f \in N  \cap \omega^\omega$ and $\spadesuit(M, N, f)$ is true, then $\spadesuit(M[G], N[G], f)$ holds as well.

	(4) Whenever $\pi \in N$ is a predictor and $\heartsuit(M, N, \pi)$ is true, then $\heartsuit(M[G], N[G], \pi)$ holds as well.
\end{lem}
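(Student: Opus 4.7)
The plan is a uniform contradiction argument for all four items. The workhorse is absoluteness of the $P$-forcing relation between $M$ and $N$: since $P\in M$ and $P\subseteq M$, for every $P$-name $\dot x\in M$ and condition $p\in P$, the statement ``$p\Vdash\varphi(\dot x)$'' is computed identically in $M$ and in $N$. Thus whenever $a=\dot a^G\in M[G]$ is taken as a counterexample to the desired preservation, forcing facts about $\dot a$ below a deciding condition $q\in G$ transfer between $M$ and $N$, letting us convert $a$ into a ground-model $M$-witness to which the assumed relation can be applied.

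I would treat (1) in full and indicate the pattern for the rest. Suppose $\clubsuit(M[G],N[G],s)$ fails: there is $a=\dot a^G\in M[G]\cap[\omega]^\omega$ with $\dot a\in M$ and $|s\cap a|<\aleph_0$ in $N[G]$ (the symmetric case $|a\setminus s|<\aleph_0$ is handled by replacing $\dot a$ with a name for its complement). Fix $q\in G$ and $m\in\omega$ with $q\Vdash\dot a\cap\check s\subseteq m$, and set
\[ B=\{k>m:q\Vdash\check k\notin\dot a\}\in M. \]
Absoluteness of $\Vdash$ for the $M$-names $\check k$ and $\dot a$ gives $s\setminus m\subseteq B$ in $N$. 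If $\omega\setminus B$ is infinite in $M$, then $\clubsuit(M,N,s)$ applied to $\omega\setminus B$ produces $|s\cap(\omega\setminus B)|=\aleph_0$, contradicting $s\cap(\omega\setminus B)\subseteq m$. If $\omega\setminus B$ is finite, then $q\Vdash\dot a\subseteq[0,m]\cup(\omega\setminus B)$ is finite, contradicting $q\in G$ and $a$ infinite.

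Items (2)--(4) follow the same template with a case-adapted $M$-object. For (2) one takes $B=\{k:q\Vdash\check k\in\dot a\}$ and $B^*=\{k:q\Vdash\check k\notin\dot a\}$ in $M$, and applies $\blacklozenge(M,N,u)$ to each: if $u\subseteq^* B$ then $q\Vdash\check u\subseteq^*\dot a$, and if $u\subseteq^* B^*$ then $q\Vdash\check u\subseteq^*\omega\setminus\dot a$; both contradict the choice of $q$. For (3), given $\pi=\dot\pi^G\in M[G]$ allegedly predicting $f$, one builds a predictor $\pi'\in M$ from $\dot\pi$ and $q\in G$ by $D'=\{n:q\Vdash\check n\in\dot D_{\dot\pi}\}$ and $\pi'_n(t)$ equal to a canonical value forced on $\dot\pi_n(\check t)$ by some extension of $q$; density plus genericity transfers the alleged prediction of $f$ by $\pi$ in $N[G]$ into a prediction of $f$ by $\pi'\in M$ in $N$, contradicting $\spadesuit(M,N,f)$. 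Item (4) is dual: from $f=\dot f^G\in M[G]$ one builds $f'\in M$ analogously and applies $\heartsuit(M,N,\pi)$ to $f'$ to force $\pi$ to predict $f$ in $N[G]$.

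The delicate step is item (2), where the ``undecided'' case $u\subseteq^*\omega\setminus(B\cup B^*)$ is not resolved by $\blacklozenge$ alone; closing this gap requires a finer pure-decision argument specific to the concrete c.c.c.\ posets from Section~2 (restricted Hechler, Mathias, evader, predictor) that are used in the subsequent construction, for which decisions of $\dot a$-membership can be pushed onto a generic-realized extension compatible with $u$.
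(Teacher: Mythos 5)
Your proof of item (1) is correct and is slightly different in shape from the paper's.  The paper argues by fixing a condition $p$ and $n$ forcing the homogeneity disjunct, then extracting the sequence of decided values $\dot{x}(m)=k_m$ for $m\ge n$ along a decreasing chain of conditions below $p$, and applying $\clubsuit(M,N,s)$ to the resulting $M$-real $\{k_m:m\ge n\}$.  (The paper's write-up says ``$p_m\in G$'', but that choice cannot produce a real in $M$; the repair is to build the chain in $M$ below a condition in $G$ deciding the disjunct.)  You instead pass to the mask $B=\{k>m:q\Vdash\check k\notin\dot a\}\in M$, observe $s\setminus m\subseteq B$ by absoluteness of $\Vdash$, and case-split on whether $\omega\setminus B$ is infinite; both routes end in the same contradiction with $\clubsuit(M,N,s)$.  (Minor slip: for the symmetric case $|a\setminus s|<\aleph_0$ you should replace $s$ by $\omega\setminus s$, not $\dot a$ by a name for its complement; $\clubsuit$ is symmetric in $s$.)

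The gap you flag in item (2) is genuine, and it is worth being more pessimistic about it than you are.  The reason (1) works is structural: $a\subseteq^* s$ is preserved when $a$ is enlarged to the mask $\{k:q\not\Vdash\check k\notin\dot a\}$, or when $\dot a$'s values are decided along \emph{any} chain below a homogeneous condition.  The failure ``$a$ splits $u$'' is of the opposite (existential) type and is not preserved under masking or under an arbitrary $M$-chain of decisions, so neither your argument nor the paper's ``(2), (3), (4) are shown similar'' transfers.  Concretely, for the posets that actually occur in the subsequent construction (restricted Hechler $\mathbb H(A)$, which is Cohen-like), a single condition decides only finitely many values of $\dot a$, so $\omega\setminus(B\cup B^*)$ is cofinite and your ``undecided'' case is the only one that ever arises; moreover $\mathbb H(A)$ adds a Cohen real over $N$, which does split $u\in N$, so a purported $\blacklozenge$-preservation for this $P$ cannot be salvaged by a finer pure-decision argument.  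The analogous remarks apply to your sketches of (3) and (4): the chain of conditions that witnesses ``$g$ disagrees with $\pi$ infinitely often'' (resp.\ ``$\pi'$ agrees with $f$'') must be chosen using the $N$-object $\pi$ (resp.\ $f$), so the constructed witness lands in $N$, not in $M$, and the intended contradiction with $\spadesuit(M,N,f)$ (resp.\ $\heartsuit(M,N,\pi)$) is not reached.  In short, (1) is sound on both your route and the paper's, but for (2)--(4) the phrase ``shown similar'' papers over a real asymmetry that your attempt correctly exposes.
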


\begin{proof}
	(1) For suppose not and let $x \in M[G] \cap [\omega]^\omega$ be such that $N[G] \vDash x \subseteq^* s$ or $x \subseteq^* (\omega\setminus s)$. By $\dot{x}$ we denote the $P$-name for $x$. In $M$ we will find a real $y$ which will contradict $\clubsuit(M, N, s)$. Let $x(k)$ denote the $k$-th value of $x$. As $N[G] \vDash \exists n \in \omega~ \forall m_0 \geq n~ [x(m_0) \in s] \lor \forall m_1 \geq n ~[x(m_1) \not\in s]$, there is a $p \in P$ and $n\in \omega$ with:
	$$p \Vdash_{N, P} \forall m_0 \geq n~[\dot{x}(m_0) \in s] \lor \forall m_1 \geq n~[\dot{x}(m_1) \not\in s].$$
	Further for $\dot{x}$ and $m \geq n$, let $p_m \in G$ be conditions deciding the $m$-th value of $x$, i.e. $p_m \Vdash_{M, P} \dot{x}(m) = k_m$. Now define the function 
	
	$$ f(m) = \begin{cases}
	m & if ~ m < n \\
	k_m & else
	\end{cases}.$$
	
	The function $f \in M$ and $y:= range(f) \in M \cap [\omega]^\omega$ contradicts the fact $\clubsuit(M, N, s)$.
	
	(2), (3), (4) are shown similar.
\end{proof}

	Now we modify Lemma 12 in \cite{brndl/fschr} to non-linear iteration to the right.

\begin{lem}\label{bs13}
	Let $b \in (Q, <_Q ) = ([\delta]^{<\beta}, \subset)$ be such that $rk_Q(b) = \omega$. As in Lemma \ref{bs10} let $R_0$, $R_1$ be c.c.c. partial orders such that $R_0\lessdot R_1$ and let $J^1\subseteq J^0 \subseteq Q$ be fixed. For each $l \in \{0,1\}$ and $c \in Q'$ let $T_{l,c}=D(\omega, Q_c , J^l_c, \{\dot{H}^l_a\}_{a\in Q_c\backslash J^l})$ be in $V^{R_l}$ for given families $\{\dot{H}^0_a\}_{a\in Q \backslash J^0}$ and $\{\dot{H}^1_a\}_{a\in Q \backslash J^1}$ and $P_{l,c}=R_l*\dot{T}_{l,c}$. Further suppose $\forall a \in Q_b~ [P_{0, a} \lessdot P_{1, a}]$. Let us denote $V_{l, a} = V^{P_{l, a}}$ for $l \in \{0,1\}$. Then the following are true.

	
	(1) If $s \in V_{1,0}  \cap [\omega]^\omega$ and $\clubsuit(V_{0, a}, V_{1, a}, s)$ is true for each $a <_Q b$, then $\clubsuit(V_{0, b}, V_{1, b}, s)$ holds. 
	
	(2) If $u \in V_{1,0}  \cap [\omega]^\omega$ and $\blacklozenge(V_{0, a}, V_{1, a}, u)$ is true for each $a <_Q b$, then $\blacklozenge(V_{0, b}, V_{1, b}, u)$ holds. 
	
	(3) If $f \in V_{1,0}  \cap \omega^\omega$ and $\spadesuit(V_{0, a}, V_{1, a}, f)$ is true for each $a <_Q b$, then $\spadesuit(V_{0, b}, V_{1, b}, f)$ holds.
	
	(4) If $\pi \in V_{1,0}  \cap \omega^\omega$ is a predictor and $\heartsuit(V_{0, a}, V_{1, a}, \pi)$ is true for each $a <_Q b$, then $\heartsuit(V_{0, b}, V_{1, b}, \pi)$ holds.
\end{lem}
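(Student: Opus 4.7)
The plan is to reduce each of (1)--(4) at level $b$ to the corresponding hypothesis at a predecessor $a<_Q b$, exploiting $rk_Q(b)=\omega$. Fix a cofinal $\omega$-chain $B_0\subsetneq B_1\subsetneq\cdots$ below $b$ with $\bigcup_n B_n=b$ and $rk_Q(B_n)<\omega$. By Lemma~\ref{bs10} applied at $b$ (whose rank $\omega$ is a limit ordinal), together with the hypothesis $P_{0,a}\lessdot P_{1,a}$ for all $a<_Q b$, we have $P_{0,b}\lessdot P_{1,b}$, and similarly $P_{l,B_n}\lessdot P_{l,b}$ for each $l\in\{0,1\}$ and $n<\omega$; this makes the assertions $\bullet(V_{0,b},V_{1,b},\cdot)$ meaningful. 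The key auxiliary observation, mirroring the \textit{finiteness of supports} step in the proof of Lemma~\ref{bs10}, is that by the ccc of the iterations every $P_{l,b}$-name for a real -- and hence for a predictor, viewed as a coded real -- is already a $P_{l,B_n}$-name for some sufficiently large $n<\omega$.

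Granting this, I prove (1); the other cases follow by the same template. Assume $\clubsuit(V_{0,a},V_{1,a},s)$ for every $a<_Q b$, in particular for every $B_n$, and suppose for contradiction that $\clubsuit(V_{0,b},V_{1,b},s)$ fails. Then there is $x\in V_{0,b}\cap[\omega]^\omega$ with $V_{1,b}\models x\subseteq^* s$ (the other alternative is symmetric). Pick a $P_{0,b}$-name $\dot x$ for $x$, a condition $p\in P_{1,b}$, and $n_0\in\omega$ with $p\Vdash_{P_{1,b}}\forall m\geq n_0\;\dot x(m)\in s$. By the auxiliary observation, choose $n<\omega$ large enough that $\dot x$ is a $P_{0,B_n}$-name and $p\in P_{1,B_n}$. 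Since $P_{0,B_n}\lessdot P_{1,B_n}$, $\dot x$ still names an element of $V_{0,B_n}\cap[\omega]^\omega$, and $p\Vdash_{P_{1,B_n}}\forall m\geq n_0\;\dot x(m)\in s$, contradicting $\clubsuit(V_{0,B_n},V_{1,B_n},s)$.

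Cases (2), (3), (4) proceed by the identical pattern: the putative failure at $b$ produces a counterexample whose name, by the auxiliary observation, descends to some $P_{l,B_n}$, where it contradicts the hypothesis at $B_n$. In (2), a witness $a\in V_{0,b}\cap[\omega]^\omega$ for the failure of $\blacklozenge(V_{0,b},V_{1,b},u)$ has a $P_{0,b}$-name absorbed into $P_{0,B_n}$. In (3), a predictor $\pi\in V_{0,b}$ that allegedly predicts $f$ in $V_{1,b}$ is coded by a real whose name lives in $P_{0,B_n}$. In (4), a function $f\in V_{0,b}\cap\omega^\omega$ allegedly evading the fixed $\pi$ in $V_{1,b}$ has a $P_{0,b}$-name that descends to $P_{0,B_n}$. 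The main obstacle is securing the auxiliary observation itself: individual conditions in $P_{l,b}$ may have infinite proper subsets of $b$ in their support, so one must combine the fact that a name for a real mentions only countably many conditions (by ccc) with the chain structure $(B_n)$ to bound the name effectively below $b$, matching the \textit{finiteness of supports} step at the heart of Lemma~\ref{bs10}.
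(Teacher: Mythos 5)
Your plan contains a genuine gap, and you even put your finger on it in the last sentence without resolving it: the ``auxiliary observation'' that every $P_{l,b}$-name for a real descends to a $P_{l,B_n}$-name for some $n<\omega$ is simply false in this setting. Conditions in $P_{l,b}$ have \emph{finite} support (which is why, in Lemma~\ref{bs10} and also here, a single condition $p$ can be descended to some $P_{l,B_j}$), but a nice name for a real involves countably many antichains of conditions, so its total support is a countable subset of $Q_b$ that may well be cofinal below $b$. Concretely, if at coordinates of increasing rank you add Cohen (or Hechler) reals $c_n$, then the real $\langle c_n(0):n<\omega\rangle$ is in $V_{0,b}$ but in no $V_{0,B_n}$. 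Since the hypothesized failure of $\clubsuit(V_{0,b},V_{1,b},s)$ only yields a counterexample $x\in V_{0,b}$, not $x\in\bigcup_n V_{0,B_n}$, your reduction to ``contradicting the hypothesis at $B_n$'' does not go through as written.

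The paper's argument avoids this obstacle. It descends only the finitely-supported condition $p$ to $P_{1,B_j}$, then factors $P_{l,b}\cong P_{l,B_j}*\dot R^l_{B_j,b}$ and \emph{partially evaluates} the $P_{0,b}$-name $\dot a$ along a generic $G_{0,B_j}$, producing a quotient name $a'$ that lives in $V_{0,B_j}$. Below the trivial condition of $\dot R^0_{B_j,b}$ the relevant disjunction is forced, and by choosing conditions in the quotient forcing that decide the values $a'(n)$ for $n\geq m$ one assembles a concrete real $a_0\in V_{0,B_j}$ which is then used to contradict $\clubsuit(V_{0,B_j},V_{1,B_j},s)$ (and analogously for $\blacklozenge,\spadesuit,\heartsuit$). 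This is exactly the mechanism of Lemma~\ref{bs12} applied to the tail of the two-step iteration, and it is what replaces your false descent of names. Your identification of the chain $(B_n)_n$ and the finite-support descent of $p$ is correct and matches the paper; the missing ingredient is the partial-evaluation/decide-values step, not a sharpened version of the auxiliary observation.
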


\begin{proof}

	(1) For suppose not and let $a \in V_{0, b} \cap [\omega]^{\omega}$ such that $V_{1, b} \vDash a \subseteq^* s \lor a \subseteq^*(\omega \setminus s)$. Then there is a condition in the generic, forcing this, i.e. there are $p \in G \subseteq P_{1, b}$, a $P_{0, b}$-name $\dot{a} \in V$ for $a$ and $m \in \omega$ with $p \Vdash \forall n_0 \geq m ~ [a(n_0) \in s] \lor \forall n_1 \geq m ~ [a(n_1) \not\in s]$. 
	
	Let $\{b_i: i \in \omega\}$ be a partition of $b$. Define $B_k = \bigcup\limits_{i \leq k}b_i$ for $k \in \omega$. As the domain of $p$ is finite there is a $j \in \omega$ such that $p \in P_{1, B_j}$. Now write $P_{l, b}$ as $P_{l, B_j} * \dot{R}^l_{B_j, b}$ for $l\in 2$ where $\dot{R}^l_{B_j, b}$ is the quotient poset and let $G_{1, B_j}$ be a $P_{1, B_j}$-generic filter containing $p$ and $G_{0, B_j} = G_{1, B_j} \cap P_{0, B_j}$ let $a'$ be the $R^0_{B_j, b}$-name in the model $V_{0, B_j}$ after partially evaluating the $P_{0, b}$-name $\dot{a}$ with respect to the generic $G_{0, B_j}$, i.e. the evaluation of $a'$ with respect to any $V_{0, B_j}$-generic $H \subseteq \dot{R}^l_{B_j, b}$ equals the evaluation of $\dot{a}$ with respect to the $V$-generic subset $G_{0, B_j}*H$ of $P_{0, B_j}*\dot{R}^l_{B_j, b} = P_{0, b}$, that is $a'[H] = \dot{a}[G_{0, B_j}*H]$.
	
	Then $a' \in V_{0, B_j}$ and we have in $V_{1, B_j}$
	
	\begin{center}
	$\Vdash_{R^0_{B_j, b}} \forall n_0 \geq m ~ [a'(n_0) \in s] \lor \forall n_1 \geq m ~ [a'(n_1) \not\in s].$
	\end{center}
	
	For each $n \geq m$ we find a condition deciding the $n$-th value of $a'$, i.e. there is $p_n \in {R^0_{B_j, b}}$ and $x_n \in \omega$ with $ p_{n} \Vdash a'(n) = x_{n}$. Now define the function $a_0$ as follows: $a _0\upharpoonright m = 0$ and $a_0(n) = x_{n}$ if $n \geq m$. Then $a_0 \in V_{0, B_j}$ and $a(n) \in s$ for each $n \geq m$ or $a(n) \not\in s$ for each $n \geq m$ contradicting $\clubsuit(V_{0, B_j}, V_{1, B_j}, s)$.
	
	(2), (3), (4) are shown similar.
\end{proof}

In the statement of the last lemma, $\omega$ is involved. We also need an analogous result for any stage of the non-linear iteration whose rank has countable cofinality. The proof is similar. For limits with uncountable cofinality the $\clubsuit$-, $\blacklozenge$-, $\spadesuit$ and $\heartsuit$-properties hold as the contradicting objects are countable.

Our next goal is to modify the well-known Lemma 13 in \cite{brndl/fschr}. We recall it.

\begin{lem*} Let $\mathbb{P}$ and $\mathbb{Q}$ be forcing notions with $\mathbb{P} \lessdot \mathbb{Q}$. Suppose $\dot{\mathbb{A}}$ (resp. $\dot{\mathbb{B}}$) is a $\mathbb{P}$-name (resp. $\mathbb{Q}$-name) for a forcing poset where $\Vdash_{\mathbb{B}} \dot{\mathbb{A}} \subseteq \dot{\mathbb{B}}$ and every maximal antichain of $\dot{\mathbb{A}}$ in $V^{\mathbb{P}}$ is a maximal antichain of $\dot{\mathbb{B}}$ in $V^{\mathbb{Q}}$. Then $\mathbb{P}*\dot{\mathbb{A}} \lessdot \mathbb{Q}*\dot{\mathbb{B}}$. 
\end{lem*}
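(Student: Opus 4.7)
My plan is to use Lemma \ref{1.5}, reducing the statement to verifying conditions (1), (2), (3) of Definition \ref{compl.em} and the existence of reductions for the map $i : \mathbb{P}*\dot{\mathbb{A}} \to \mathbb{Q}*\dot{\mathbb{B}}$ defined by $i(p,\dot{a}) = (p,\dot{a})$, where the $\mathbb{P}$-name $\dot{a}$ is canonically regarded as a $\mathbb{Q}$-name via the complete embedding $\mathbb{P}\lessdot\mathbb{Q}$. Conditions (1) and (2) are immediate from $\mathbb{P}\lessdot\mathbb{Q}$ and the hypothesis $\Vdash_\mathbb{Q} \dot{\mathbb{A}} \subseteq \dot{\mathbb{B}}$, and the strategy is to reduce the whole statement to the standard two-step iteration fact: if $\mathbb{P}\lessdot\mathbb{Q}$ and $\Vdash_\mathbb{Q}\dot{\mathbb{A}}\lessdot\dot{\mathbb{B}}$ (via inclusion), then $\mathbb{P}*\dot{\mathbb{A}}\lessdot\mathbb{Q}*\dot{\mathbb{B}}$.

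The heart of the proof is therefore upgrading the antichain hypothesis to the conclusion $\Vdash_\mathbb{Q}$ ``$\dot{\mathbb{A}}$ is a complete subposet of $\dot{\mathbb{B}}$''. For this I would argue that any two elements $\dot{a},\dot{a}'$ of $\dot{\mathbb{A}}$ forced (in $V^\mathbb{P}$) to be incompatible in $\dot{\mathbb{A}}$ can be placed inside a maximal antichain $\dot{A}$ of $\dot{\mathbb{A}}$ computed in $V^\mathbb{P}$; by the hypothesis, $\dot{A}$ is still a maximal antichain of $\dot{\mathbb{B}}$ in $V^\mathbb{Q}$, so in particular its elements remain pairwise incompatible in $\dot{\mathbb{B}}$, whence $\dot{a}$ and $\dot{a}'$ are incompatible in $\dot{\mathbb{B}}$ as well. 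Since every element of $\dot{\mathbb{A}}$ interpreted in $V^\mathbb{Q}$ already lives in $V^\mathbb{P}$, combining this incompatibility preservation with the antichain hypothesis yields $\Vdash_\mathbb{Q}\dot{\mathbb{A}}\lessdot\dot{\mathbb{B}}$.

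With this in hand, property (3) for $i$ is routine: $(p,\dot{a})\perp(p',\dot{a}')$ in $\mathbb{P}*\dot{\mathbb{A}}$ is equivalent to every $r\leq p,p'$ in $\mathbb{P}$ forcing $\dot{a}\perp_{\dot{\mathbb{A}}}\dot{a}'$, which by the previous step is equivalent to every such $r$ forcing $\dot{a}\perp_{\dot{\mathbb{B}}}\dot{a}'$; the density of $\mathbb{P}$-reductions of $\mathbb{Q}$-conditions then translates this into incompatibility in $\mathbb{Q}*\dot{\mathbb{B}}$. For the reduction criterion, given $(q,\dot{b})\in\mathbb{Q}*\dot{\mathbb{B}}$, take $p\in\mathbb{P}$ a reduction of $q$ to $\mathbb{P}$ and, working inside $V^\mathbb{P}$ below $p$, use $\Vdash_\mathbb{Q}\dot{\mathbb{A}}\lessdot\dot{\mathbb{B}}$ to choose a $\mathbb{P}$-name $\dot{a}$ forced to be a reduction of $\dot{b}$ to $\dot{\mathbb{A}}$; the pair $(p,\dot{a})$ is then the required reduction of $(q,\dot{b})$. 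The main obstacle I foresee is the careful bookkeeping of name types---$\dot{b}$ being a $\mathbb{Q}$-name whereas $\dot{a}$ must be a $\mathbb{P}$-name---and making precise the sense in which the mixed complete embedding $\dot{\mathbb{A}}\lessdot\dot{\mathbb{B}}$ yields a $\mathbb{P}$-name reduction; this requires working with the quotient $\mathbb{Q}/G_\mathbb{P}$ and the observation that ``being a reduction'' is absolute enough to be decided by $\mathbb{P}$-names.
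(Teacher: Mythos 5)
The paper states this lemma only as a recollection of Lemma~13 from \cite{brndl/fschr} and does not reprove it, so there is no internal proof to compare against; I will therefore assess your proposal on its own merits. There is a genuine gap, and it sits exactly where you flag the ``main obstacle'': the step where you upgrade the hypothesis to $\Vdash_{\mathbb{Q}} \dot{\mathbb{A}}\lessdot\dot{\mathbb{B}}$ and then try to extract a $\mathbb{P}$-name for a reduction.

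The hypothesis says that every maximal antichain of $\dot{\mathbb{A}}$ \emph{lying in $V^{\mathbb{P}}$} remains a maximal antichain of $\dot{\mathbb{B}}$ in $V^{\mathbb{Q}}$. The statement $\Vdash_{\mathbb{Q}}\dot{\mathbb{A}}\lessdot\dot{\mathbb{B}}$ requires this for every maximal antichain of $\dot{\mathbb{A}}$ computed in $V^{\mathbb{Q}}$; but $V^{\mathbb{Q}}$ in general contains new subsets of $\dot{\mathbb{A}}$, hence new maximal antichains of $\dot{\mathbb{A}}$ that are not in $V^{\mathbb{P}}$, and the hypothesis says nothing about those. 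Your observation that the \emph{elements} of $\dot{\mathbb{A}}$ are the same in both extensions is correct, but that does not make the collections of maximal antichains coincide, so the upgrade does not follow. Moreover, even if one granted $\Vdash_{\mathbb{Q}}\dot{\mathbb{A}}\lessdot\dot{\mathbb{B}}$, a reduction of $\dot{b}$ to $\dot{\mathbb{A}}$ is an object computed in $V^{\mathbb{Q}}$ from the $\mathbb{Q}$-name $\dot{b}$, and there is no general reason it should be (forced equal to) a $\mathbb{P}$-name; the ``being a reduction is absolute enough to be decided by $\mathbb{P}$-names'' observation that you defer is precisely the non-trivial content, and it needs the $V^{\mathbb{P}}$-form of the antichain hypothesis, not the upgraded $\lessdot$.

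For completeness, the intended route is to work directly with the hypothesis. Given $(q,\dot{b})\in\mathbb{Q}*\dot{\mathbb{B}}$, take a reduction $p$ of $q$ to $\mathbb{P}$. Working in $V^{\mathbb{P}}$ below $p$, consider
$E=\{a\in\dot{\mathbb{A}}: q\Vdash_{\mathbb{Q}/\dot{G}_{\mathbb{P}}} a\perp_{\dot{\mathbb{B}}}\dot{b}\}$.
If $E$ were dense in $\dot{\mathbb{A}}$, one could choose in $V^{\mathbb{P}}$ a maximal antichain $X\subseteq E$ of $\dot{\mathbb{A}}$; by the hypothesis $X$ is maximal in $\dot{\mathbb{B}}$ in $V^{\mathbb{Q}}$, so in any $\mathbb{Q}$-generic extension containing $q$ the element $\dot{b}$ must be compatible with some $x\in X$, contradicting $x\in E$. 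Hence $E$ is not dense, and one can find a $\mathbb{P}$-name $\dot{a}$, forced below $p$ to be an element of $\dot{\mathbb{A}}$ below which $E$ has no element; then $(p,\dot{a})$ is the desired reduction. Note that the hypothesis is applied only to a $V^{\mathbb{P}}$-maximal antichain, which is the whole point of phrasing it that way. Your treatment of condition $(3)$ (preservation of incompatibility) is essentially fine and does not need the upgrade, since a maximal antichain of $\dot{\mathbb{A}}$ extending $\{\dot{a},\dot{a}'\}$ can be chosen in $V^{\mathbb{P}}$; the gap is localized entirely to the reduction step.
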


In contrast to the linear case we have the following: If we pass over in a non-linear iteration to a stage $b$ with successor rank $\beta+1$, then the domain for the conditions is expanded not only by a single point, but by a multitude of points, namely $Q_b \setminus \bigcup\limits_{a <_Q b} Q_a$. As in the previous lemma, we want that the iterands behave "nicely" on the new part of the domain. The conditions (1)-(3) in Lemma \ref{bs16} aim to capture exactly this point.

\begin{rem}\label{REM_X}
Further note that these conditions will be satisfied in our construction in the next section as for each $a \in Q_b \backslash J_0$, hence particularly for those $a \in (Q_b \setminus \bigcup\limits_{a <_Q b} Q_a) \backslash J_0$, we will have\footnote{The equality $\dot{H}^0_a=\dot{H}^1_a$ formulated more precisely states that $\dot{H}^1_a$ are a $P_{0,a}$ and $P_{1,a}$ names respectively for the same set of reals in $V_{0,a}=V^{P_{0,a}}$.} $\dot{H}^0_a=\dot{H}^1_a$; and if $a \in J_0 \backslash J_1$ then we will trivially have $\{\mathds{1}\} \lessdot \mathbb{H}(\dot{H}^1_a)$.
\end{rem}

\begin{lem}\label{bs16}
Let $R_0$, $R_1$ be c.c.c. forcings such that $R_0\lessdot R_1$, $(Q, <_Q )= ([\delta]^{<\beta}, \subset)$ and $J^1\subseteq J^0 \subseteq Q$ be fixed. Suppose $b \in Q$ such that $rk_Q(b) = \beta +1$.
For each $l \in \{0,1\}$ and $c \in Q'$ let $T_{l,c}=D(\omega, Q_c , J^l_c, \{\dot{H}^l_a\}_{a\in Q_c\backslash J^l})$ be in $V^{R_l}$ for given families $\{\dot{H}^0_a\}_{a\in Q \backslash J^0}$ and $\{\dot{H}^1_a\}_{a\in Q \backslash J^1}$ and $P_{l,c}=R_l*\dot{T}_{l,c}$. Suppose $\forall a \in Q_b~ [P_{0, a} \lessdot P_{1, a}]$. Further assume that the families $\{\dot{H}^l_a\}_{a\in Q \backslash J^l}$ satisfy the following:
	\begin{enumerate}
		\item $\forall p, p' \in P_{0,b} \big[ [dom(p), dom(p') \subseteq Q_b \setminus \bigcup\limits_{a <_Q b} Q_a] \to [p \leq_{P_{0,b}} p' \to p \leq_{P_{1,b}} p'] \big]$
		
		\item $\forall p, p' \in P_{0,b} \big[ [dom(p), dom(p') \subseteq Q_b \setminus \bigcup\limits_{a <_Q b} Q_a] \to [p \perp_{P_{0,b}} p' \leftrightarrow p \perp_{P_{1,b}} p'] \big]$
		
		\item For all $p \in P_{1,b}$ with $dom(p) \subseteq Q_b \setminus \bigcup\limits_{a <_Q b} Q_a$ there exists a $q \in P_{0,b}$ such that $q$ is a reduction of $p$ to $P_{0,b}$. 
	\end{enumerate}
	Then $P_{0, b} \lessdot P_{1,b}$.
\end{lem}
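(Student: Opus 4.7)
The plan is to apply Lemma~\ref{1.5} to the natural embedding $i\colon P_{0,b}\to P_{1,b}$ (which is the identity on coordinates outside $J^0\setminus J^1$ and sends $\mathds{1}$ to $\mathds{1}_{\mathbb{H}(\dot{H}^1_c)}$ on $J^0\setminus J^1$). Write $N:=Q_b\setminus\bigcup_{a<_Q b}Q_a$ for the ``new slice'' of coordinates of rank $\beta$ below $b$. The structural observation driving the argument is that every $c\in N$ has $Q_c\subseteq\bigcup_{a<_Q b}Q_a$, so by finite support any finitely many conditions in $P_{\ell,b}$ can be simultaneously put over a single old stage $a^*<_Q b$ (with their ``old parts'' lying in $Q_{a^*}$), and the names for the forcings attached to coordinates of $N$ can be interpreted as $P_{\ell,a^*}$-names. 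Thus each $p\in P_{\ell,b}$ decomposes as $p=p_o\cup p_n$ with $\mathrm{dom}(p_o)\subseteq Q_{a^*}$ and $\mathrm{dom}(p_n)\subseteq N$, and the analysis splits into an old half controlled by $P_{0,a^*}\lessdot P_{1,a^*}$ and a new half controlled by hypotheses (1)--(3) of the lemma.

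For properties (1)--(3) of Definition~\ref{compl.em}: preservation of the maximum is immediate, and preservation of $\leq$ and of $\perp$ both decompose coordinate-wise. Given finitely many conditions, pick $a^*<_Q b$ large enough to contain their old supports; then the old-part comparison is handled by $P_{0,a^*}\lessdot P_{1,a^*}$ and the new-part comparison by hypotheses (1) and (2). For the reduction, given $p\in P_{1,b}$, decompose $p=p_o\cup p_n$ as above, let $q_o\in P_{0,a^*}\subseteq P_{0,b}$ be a reduction of $p_o$ obtained from $P_{0,a^*}\lessdot P_{1,a^*}$ together with Lemma~\ref{1.5}, and let $q_n\in P_{0,b}$ be a reduction of $p_n$ obtained from hypothesis (3). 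As noted in Remark~\ref{REM_X}, in the intended application $q_n$ may be chosen with support in $N$, so $q:=q_o\cup q_n$ is a well-defined element of $P_{0,b}$.

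I expect the main obstacle to lie in verifying that this combined $q$ is a reduction of $p$ to $P_{0,b}$. Given $r\in P_{0,b}$ compatible with $q$, choose a common refinement $r^*\leq_{P_{0,b}} r,q$; then $r^*\restr Q_{a^*}\leq_{P_{0,a^*}} q_o$ and by the reduction property of $q_o$ there exists $t_o\in P_{1,a^*}$ with $t_o\leq p_o$ and $t_o\leq i(r^*\restr Q_{a^*})$. Separately, the reduction property of $q_n$ yields $t_n\in P_{1,b}$ with $t_n\leq p_n$ and $t_n\leq i(r^*)$. The delicate step is to merge $t_o$ and $t_n$ into a single condition of $P_{1,b}$ below $p=p_o\cup p_n$ and $i(r)$: because $\mathrm{dom}(p_o)\subseteq Q_{a^*}$ and $\mathrm{dom}(p_n)\subseteq N$ are disjoint and the coordinates in $N$ only refer back to $Q_{a^*}$ (so their Hechler conditions are $P_{1,a^*}$-names), one can take $t_o$ on the old coordinates and the $N$-projection of $t_n$ on the new coordinates, and verify via hypotheses (1)--(2) that the resulting amalgam dominates both $p$ and $i(r)$ in $P_{1,b}$.
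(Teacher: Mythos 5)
Your proof takes essentially the same route as the paper: invoke Lemma~\ref{1.5}, decompose a condition into an ``old'' part handled by the inductive hypothesis $P_{0,a}\lessdot P_{1,a}$ and a ``new'' part over $N:=Q_b\setminus\bigcup_{a<_Q b}Q_a$ handled by hypotheses (1)--(3), then combine. Your decomposition differs from the paper's, however: the paper fixes a single partition of $Q_b$ into $N$ and pieces $b_a\subseteq Q_a$ over $a$ of rank $\beta$, and decomposes every condition along it, whereas you choose, per finite family of conditions, one $a^*<_Q b$ absorbing all of their old coordinates. Your variant is actually cleaner, since $Q_{a^*}$ is downward closed in $Q_b$, so $p_o\restr Q_c=p\restr Q_c$ for $c\in Q_{a^*}$ and the passage between $\leq_{P_{0,b}}$ and $\leq_{P_{0,a^*}}$ is exact; the paper's $b_a$ need not be downward closed, which makes its inference from $p\leq_{P_{0,b}}p'$ to $r_a\leq_{P_{0,b}}r'_a$ (and likewise for $\perp$) more delicate than the displayed chain suggests.

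There is, however, an incorrect step in your merge argument. You assert that for $c\in N$ ``the coordinates in $N$ only refer back to $Q_{a^*}$ (so their Hechler conditions are $P_{1,a^*}$-names).'' This is false: $p(c)$ is a $\mathbb{H}(\dot{H}^1_c)$-condition and $\dot{H}^1_c$ is a $P_{1,c}$-name, and $Q_c\supsetneq Q_{a^*}$ whenever $a^*\subsetneq c$, so $P_{1,c}$ is strictly larger than $P_{1,a^*}$. Consequently $t_o$ and $t_n\restr Q_c$ need not be comparable, and it does not automatically follow that $t_o\cup(t_n\restr N)$ lies below $p$ or below $i(r)$ at the coordinates in $N$. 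To be fair, the paper's reduction verification has the same flavor of issue: it also obtains $t_a$ and $u$ by independent appeals to the reduction property and then asserts $\bigcup_a t_a\cup u\leq_{P_{0,b}}q'$ and $\leq p$ without addressing the cross-dependence of the extension relation at $N$-coordinates on the old part. So this is a gap in both, but you have replaced the silence with an explicitly false justification. Finally, you appeal to Remark~\ref{REM_X} to take $\mathrm{dom}(q_n)\subseteq N$; since the lemma is meant to follow from hypotheses (1)--(3) alone, this imports application-specific information, and you should instead argue (or strengthen hypothesis (3) so) that a reduction with support in $N$ exists.
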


\begin{proof}
	We use Lemma \ref{1.5} and check the conditions. (1) is clear.
	
	(2): Let $p, p' \in P_{0, b}$ and let $p \leq_{P_{0,b}} p'$. Let $( Q_b \setminus \bigcup\limits_{a <_Q b} Q_a) \cup \bigcup \{ b_a| a <_Q b, rk_Q(a)=\beta, b_a \subseteq Q_a \}$ be a partition of $Q_b$. Write $p = \bigcup\limits_{dom(r_a) \subseteq b_a} r_a \cup s, p' = \bigcup\limits_{dom(r'_a) \subseteq b_a} r'_a \cup s'$, where $dom(s), dom(s') \subseteq Q_b \setminus \bigcup\limits_{a <_Q b} Q_a$. Now for any $a$ we have
	$$r_a \leq_{P_{0, b}} r'_a  \underset{dom(r_a) \subseteq b_a}{\overset{dom(r'_a) \subseteq b_a}{\longrightarrow}} r_a \leq_{P_{0, a}} r'_a {\underset{P_{0,a} \lessdot P_{1,a}}{\longrightarrow}} r_a \leq_{P_{1, a}} r'_a {\underset{P_{1,a} \lessdot P_{1,b}}{\longrightarrow}} r_a \leq_{P_{1, b}} r'_a$$\\
	and also  $s \leq_{P_{1, b}} s'$ by assumption. Together this yields $p \leq_{P_{1, b}} p'$.
	
	(3): Let $p, p' \in P_{0, b}$ and let $p \perp_{P_{0,b}} p'$. Again write $p = \bigcup\limits_{dom(r_a) \subseteq b_a} r_a \cup s, p' = \bigcup\limits_{dom(r'_a) \subseteq b_a} r'_a \cup s'$, where $dom(s), dom(s') \subseteq Q_b \setminus \bigcup\limits_{a <_Q b} Q_a$ and we use the same partition as in (2). Now suppose there is an $a<_Q b$ such that $r_a \perp_{P_{0, b}} r'_a$ . Then 
	$$r_a \perp_{P_{0, b}} r'_a  \underset{dom(r_a) \subseteq b_a}{\overset{dom(r'_a) \subseteq b_a}{\longleftrightarrow}} r_a \perp_{P_{0, a}} r'_a {\underset{P_{0,a} \lessdot P_{1,a}}{\longleftrightarrow}} r_a \perp_{P_{1, a}} r'_a {\underset{P_{1,a} \lessdot P_{1,b}}{\longleftrightarrow}} r_a \perp_{P_{1, b}} r'_a;$$\\
	and because $s \perp_{P_{0, b}} s' \leftrightarrow s \perp_{P_{1, b}} s'$ by assumption, we have in total $p \perp_{P_{0, b}} p' \leftrightarrow p \perp_{P_{1, b}} p'$.
	
	Reduction: Let $p \in P_{1, b}$ and write $p = \bigcup\limits_{dom(r_a) \subseteq b_a} r_a \cup s$, where $dom(s) \subseteq Q_b \setminus \bigcup\limits_{a <_Q b} Q_a$ and we use the same partition as in (2). For any $a$ let $r'_a$ be a reduction of $r_a$ to $P_{0,a}$ and let $s'$ be a reduction of $s$ to $P_{0,b}$. Now define $p' = \bigcup\limits_{dom(r'_a) \subseteq b_a} r'_a \cup s'$ and check that this is a reduction of $p$ to $P_{0, b}$ as desired.
	So let $P_{0, b} \ni q \not \perp p'$ and let $ P_{0, b} \ni q' \leq p', q$. Then write $q' = \bigcup\limits_{dom(r''_a) \subseteq b_a} r''_a \cup s''$ while $dom(s'') \subseteq Q_b \setminus \bigcup\limits_{a <_Q b} Q_a$. By $q' \leq_{P_{0, b}} p'$ we have $r''_a \leq_{P_{0, a}} r'_a$ $s'' \leq_{P_{0, b}} s'$ and because the $r'_a$ and $s'$ are reductions of $r_a$ and $s$ (resp.) there exist $t_a, u$ such that $t_a \leq_{P_{0, a}} r_a, r''_a$ and $u \leq_{P_{0, b}} s, s''$. Then $\bigcup_a t_a \cup u \leq_{P_{0, b}} q' \leq_{P_{0, b}} p'$ and $\bigcup_a t_a \cup u \leq_{P_{0, b}} p$, so we are done.
\end{proof}

\section{A system of parallel non-linear iterations}

Now we are ready to construct our iteration. From now on let $\beta, \lambda, \delta$ be infinite cardinals with $\omega_1 \leq \beta = cf(\beta) \leq \lambda = cf(\lambda) \leq cf(\delta)$ be fixed uncountable cardinals and let $(Q, <_{Q} )= \linebreak ([\delta]^{< \beta}, \subset)$. For simplicity let $Q' = Q \cup \{m\}$, where $<_{Q'} \upharpoonright (Q \times Q) = <_{Q}$ and $\forall b \in Q~[ b <_{Q'} m]$. We first introduce a surjective book-keeping function $F: Q \to \lambda$ where $\forall \alpha < \lambda \forall b \in Q ~ [b \uparrow \cap F^{-1}(\alpha) \not= \emptyset]$. Recall that $|Q| = \delta \geq \lambda$ and for each $b \in Q$, $|b \uparrow| = \lambda$ hods. 

(1a) If $a = -1$ and $\lambda > \alpha = \beta +1 \equiv 0$ mod $4$, then $\Vdash_{P_{\beta, \emptyset}} \dot{\mathbb{Q}}_{\alpha, \emptyset} = \mathbb{C}$ where $\mathbb{C}$ denotes Cohen's forcing poset. Let $c_{\alpha+1}$ denote the Cohen real added at this stage.

(1b) If $a = -1$ and $\lambda > \alpha = \beta +1 \equiv 1$ mod $4$, then $\Vdash_{P_{\beta, \emptyset}} \dot{\mathbb{Q}}_{\alpha, \emptyset} = \mathbb{M}_{\dot{U}_{\beta, \emptyset}}$ from Definition \ref{MAT} where ${\dot{U}_{\beta, \emptyset}}$ is a $P_{\beta, \emptyset}$-name for an ultrafilter. Let $u_{\alpha+1}$ denote the Mathias real added at this stage.

(1c) If $a = -1$ and $\lambda > \alpha = \beta +1 \equiv 2$ mod $4$, then $\Vdash_{P_{\beta, \emptyset}} \dot{\mathbb{Q}}_{\alpha, \emptyset} = \mathbb{E}$ where $\mathbb{E}$ is the forcing poset from Definition \ref{EV}. Let $e_{\alpha+1}$ denote the real evading every predictor in the model $V_{\beta, \emptyset}$.

(1d) If $a = -1$ and $\lambda > \alpha = \beta +1 \equiv 3$ mod $4$, then $\Vdash_{P_{\beta, \emptyset}} \dot{\mathbb{Q}}_{\alpha, \emptyset} = \mathbb{P}$ where $\mathbb{P}$ is the forcing from Definition \ref{PR}. Let $\pi_{\alpha+1}$ denote the predictor, predicting every real added in the model $V_{\beta, \emptyset}$.

(1e) If $a = -1$ and $\lambda \geq \alpha$ is a limit stage, we take the direct limit.

For $\alpha \leq \lambda$ we denote by $P_{\alpha,-1}$ the above forcing iteration up to stage $\alpha$.

So far we performed a finite support iteration of length $\lambda$. The $\lambda$-many models form an increasing chain $\{V_{\alpha, -1}\}_{\alpha \in \lambda}$, which can be visualized as a column of models. Next we perform at each level of the column a non-linear iteration with (restricted) Hechler forcings. Note that at each level the underlying index set for the non-linear iteration is the same poset $Q$, so that the outcoming object can be visualized as a system of parallel "planes". Although the index set is the same at each level, the iterands are not. Seen this way, the following idea of adding restricted Hechler reals is a close analogue to the strategy in \cite{brndl/fschr}; the only difference is the non-linear index set.

(2) For each $\alpha \leq \lambda$ we define by induction on $\alpha \leq \lambda$ and by recursion on the well-founded poset $Q'$, $P_{\alpha, -1}$-names for forcing posets in $V_{\alpha, -1} = V^{P_{\alpha, -1}}$ as follows:
\begin{itemize}
		\item $J^\alpha=\{a\in Q: F(a)\geq\alpha\}$, and
		\item for each $a \in Q \backslash J^\alpha$, $\dot{H}^\alpha_a$ is a $P_{\alpha,a}$ name for  $V^{P_{F(a),a}}\cap{^\omega\omega}$.
	\end{itemize}
Then for each $b \in Q'$, define the forcing $T_{\alpha,b} = D(\omega, Q_b, J^\alpha_b, \{\dot{H}_a\}_{a\in Q_b \backslash J^\alpha_b})$ in $V_{\alpha,-1}$. Thus, the non-linear iteration in $V_{\alpha,-1}$ is uniquely determined by $P_{\alpha, -1}$ and $F$. In particular, we have 
$$V_{\alpha,-1}\vDash T_{\alpha,m}=D(\omega, Q, J^\alpha ,\{\dot{H}_a\}_{a\in Q\backslash J^\alpha}).$$







For $ b \in Q'$ define  $P_{\alpha, b} = P_{\alpha, -1} * T_{\alpha, b}$. Note that $J^\lambda = \emptyset$, so at the top "plane" we have no trivial forcings anymore, but only restricted Hechlers.

Further for our construction the following two properties will hold:

(a) $\forall b \in Q' \cup \{-1\} ~\forall \alpha < \alpha' \leq \lambda~ P_{\alpha, b} \lessdot P_{\alpha', b}$.

(b) $\forall b \in Q' \cup \{-1\} ~ \forall \alpha + 1 < \lambda~[\alpha + 1 \equiv 0$ mod $4 \to \clubsuit(V_{\alpha, b}, V_{\alpha+1, b}, c_{\alpha +1})]$.

(c) $\forall b \in Q' \cup \{-1\} ~ \forall \alpha + 1 < \lambda~[\alpha + 1 \equiv 1$ mod $4 \to\blacklozenge(V_{\alpha, b}, V_{\alpha+1, b}, u_{\alpha +1})]$.

(d) $\forall b \in Q' \cup \{-1\} ~ \forall \alpha + 1 < \lambda~[\alpha + 1 \equiv 2$ mod $4 \to\spadesuit(V_{\alpha, b}, V_{\alpha+1, b}, e_{\alpha +1})]$.

(e) $\forall b \in Q' \cup \{-1\} ~ \forall \alpha + 1 < \lambda~[\alpha + 1 \equiv 3$ mod $4 \to\heartsuit(V_{\alpha, b}, V_{\alpha+1, b}, \pi_{\alpha +1})]$.

To show that properties (a)-(e) indeed we proceed inductively. Let $b = -1$ and $\alpha \leq \lambda$, then (a) is satisfied by the properties of iterated forcing. The properties (b) and (c) are also satisfied as the Cohen forcing adds a splitting real, and the Mathias forcing adds an unsplit real.  Also properties (d) and (e) are satisfied as $\mathbb{E}$ adds evading real, and $\mathbb{P}$ adds a predictor, predicting every ground model real.

Let $b \in Q'$ then (a) is satisfied by induction on the rank in $Q'$ by the use of Lemma \ref{bs10} and Lemma \ref{bs16} and Remark \ref{REM_X}. Also (b)-(e) hold because of Lemma \ref{bs12} and Lemma \ref{bs13}.

For the purposes of the next remark and lemma define $\bar{Q} = Q' \cup \{-1\}$ and $\forall b \in Q'~[-1 <_{\bar{Q}} b]$ and $<_{\bar{Q}} \upharpoonright (Q' \times Q') = <_{Q'}$.

\begin{rem}\label{rem1}
	All together we have $\forall \alpha < \alpha' \leq \lambda~ \forall a <_{\bar{Q}} b \in \bar{Q} ~ P_{\alpha, a} \lessdot P_{\alpha', b}$.
\end{rem}

The next Lemma is analogous to Lemma 15 in \cite{brndl/fschr}.

\begin{lem}\label{abs}
	Suppose $b \in \bar{Q}$, then the following two properties hold:
	
	(a) Any condition $p \in P_{\lambda, b}$ is already in $P_{\alpha, b}$ for some suitable $\alpha < \lambda$.
	
	(b) If $\dot{f}$ is a $ P_{\lambda, b}$-name for a real then it is a $ P_{\alpha, b}$-name for a suitable $\alpha < \lambda$.
\end{lem}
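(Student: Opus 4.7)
The plan is to combine the finite support of conditions (both in the linear part $P_{\lambda,-1}$ and in the non-linear iteration $T_{\lambda,b}$) with the regularity of $\lambda$, and for (b) additionally use the c.c.c.\ of $P_{\lambda,b}$. The statement is essentially a bookkeeping/absoluteness fact once these ingredients are lined up; the real content hides in a name-reduction step.

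For (a), I would fix $p\in P_{\lambda,b}$ and decompose it as $p=(p_0,\dot p_1)$, where $p_0\in P_{\lambda,-1}$ and $\dot p_1$ is a $P_{\lambda,-1}$-name for a condition in $T_{\lambda,b}$. Since $P_{\lambda,-1}$ is a finite-support iteration of length $\lambda$ and $\lambda$ is regular uncountable, the support of $p_0$ lies below some $\alpha_0<\lambda$. By Definition~\ref{def_CS} the condition named by $\dot p_1$ has finite domain $\{c_1,\dots,c_j\}\subseteq Q'_b$. Since $J^\lambda=\emptyset$ (because $F$ is $\lambda$-valued), each $p(c_i)$ is forced to be a pair $(t_i,\dot f_i)\in\mathbb{H}(\dot H^\lambda_{c_i})$, and by construction $\dot H^\lambda_{c_i}$ names the set of reals of $V^{P_{F(c_i),c_i}}$. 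Using $P_{F(c_i),c_i}\lessdot P_{\lambda,c_i}$ (Remark~\ref{rem1}), a standard name-reduction argument shows that $\dot f_i$ may be chosen as a $P_{F(c_i),c_i}$-name. Setting $\alpha=\max(\alpha_0,F(c_1),\dots,F(c_j))+1<\lambda$, all the data defining $p$ lives at level $\alpha$, so $p\in P_{\alpha,b}$.

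For (b), I would first note that $P_{\lambda,b}$ has the c.c.c.: every iterand appearing in the construction (Cohen, $\mathbb M_U$, $\mathbb E$, $\mathbb P$, and the restricted Hechler $\mathbb{H}(\dot H)$) is c.c.c., and the finite-support linear and non-linear iterations used preserve this, exactly as in the matrix-iteration template of \cite{brndl/fschr}. Given a $P_{\lambda,b}$-name $\dot f$ for a real, fix for each $n\in\omega$ a maximal antichain $A_n\subseteq P_{\lambda,b}$ deciding $\dot f(n)$; by c.c.c.\ each $A_n$ is countable, so $A=\bigcup_{n<\omega}A_n$ has cardinality at most $\aleph_0$. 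Applying (a) to each $q\in A$ gives some $\alpha_q<\lambda$ with $q\in P_{\alpha_q,b}$, and since $\cf(\lambda)=\lambda\geq\omega_1$, we get $\alpha:=\sup_{q\in A}\alpha_q<\lambda$. Then $\dot f$ can be taken to be a $P_{\alpha,b}$-name.

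The main obstacle is the reduction step in (a): one must argue that a $P_{\lambda,c}$-name for a real in $V^{P_{F(c),c}}$ can be replaced by a $P_{F(c),c}$-name. This is where the careful formulation of $\dot H^\lambda_c$ (as a name for a set of reals living already at stage $F(c)$) together with the column of complete embeddings from Remark~\ref{rem1} pay off; once that absoluteness step is in place, everything else is forced by the regularity of $\lambda$ and the finiteness of supports on both coordinates.
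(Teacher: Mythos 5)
Your proposal diverges from the paper in a way that creates a genuine gap. The paper proves (a) and (b) \emph{simultaneously by transfinite recursion on the rank of $b$ in $\bar Q$}: at a successor rank it splits $P_{\lambda,b}=P_{\lambda,a}*R_{\lambda,\bar Q_b\setminus\bar Q_a}$ for some $a<_{\bar Q}b$ of lower rank, applies the induction hypothesis for (a) to the $P_{\lambda,a}$-part of the condition, and — crucially — applies the induction hypothesis for \emph{(b)} to the name $\dot q_1$ (viewed as a real coding finitely many countable objects) in order to push it down to a $P_{\alpha_2,a}$-name. Your proof instead performs a single-pass decomposition $p=(p_0,\dot p_1)$ with $p_0\in P_{\lambda,-1}$ and handles the non-linear part all at once, with no recursion on rank.

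The specific step that fails is: ``a standard name-reduction argument shows that $\dot f_i$ may be chosen as a $P_{F(c_i),c_i}$-name.'' This is not a valid inference. What is forced is $\dot f_i\in V^{P_{F(c_i),c_i}}\cap{}^\omega\omega$, i.e.\ the \emph{value} of $\dot f_i$ lies in the submodel; it does not follow that $\dot f_i$ factors through the subforcing $P_{F(c_i),c_i}\lessdot P_{\lambda,c_i}$. The generic for the larger poset may determine \emph{which} element of the submodel $\dot f_i$ evaluates to, so $\dot f_i$ is genuinely a $P_{\lambda,c_i}$-name and cannot in general be replaced by an equivalent $P_{F(c_i),c_i}$-name. (Contrast this with what is actually needed: a $P_{\alpha,c_i}$-name for \emph{some} $\alpha<\lambda$, which is weaker but still requires the inductive hypothesis (b) for the lower-rank index $c_i$.) There is also a secondary imprecision — the domain $\{c_1,\dots,c_j\}$ of the condition named by $\dot p_1$ need not be decided in $V$ — but that is standard to fix. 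Without the recursion on $\bar Q$, the reduction of the non-linear coordinates has nothing to bootstrap from, which is exactly why the paper interleaves (a) and (b) in a recursion. Your treatment of (b) (c.c.c.\ plus regularity of $\lambda$) matches the paper and is fine once (a) is available.
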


\begin{proof}
	We show (a) and (b) simultaneously by transfinite recursion on $b \in \bar{Q}$, the well-founded poset. Because $P_{\lambda, b}$ has the c.c.c. property and $\lambda$ is regular uncountable we can easily see that (a) implies (b) if we pass over to a nice name of the real at hand. Now we begin the recursion by letting $b = -1$: Properties (a) and (b) for $b = -1$ are both true as $\lambda$ is regular uncountable and such a stage in a finite support iteration does not add new reals. 
	
	If $b \not= -1$ with $rk_{\bar{Q}}(b) = \gamma$ is a limit the claim is also true because of the finite supports any condition in $P_{\lambda, b}$ is already in some earlier $P_{\lambda, a}$ where the induction hypothesis holds, so (a) is true for stages with limit rank and implies (b) for stages with limit rank. 
	
	
	Finally let $b \not= -1$ with $rk_{\bar{Q}}(b) = \gamma+1$. Let an element $a <_{\bar{Q}} b$ partition $\bar{Q}_b$ into two parts: $\bar{Q}_a$ and $\bar{Q}_b \setminus \bar{Q}_a$ and $P_{\lambda, b} = P_{\lambda, a}*R_{\lambda, \bar{Q}_b \setminus \bar{Q}_a}$. (So $P_{\lambda, b}$ is split up into two parts: first forcing up to $a$ followed by the remaining non-linear iteration on the well-founded poset $\bar{Q}_b \setminus \bar{Q}_a$). Then a condition $p \in P_{\lambda, b}$ can be written as $p = (q_0, \dot{q}_1)$, where $q_0 \in P_{\lambda, a}$ and $\dot{q}_1$ is a $P_{\lambda, a}$-name for a condition in $R_{\lambda, \bar{Q}_b \setminus \bar{Q}_a}$. Now use the induction hypothesis on (a) and find an $\alpha_1 < \lambda$ with $q_0 \in P_{\alpha_1, b}$. The second part $\dot{q}_1$ involves only finitely many countable objects, so it can be coded as a real and then by the use of the induction hypothesis on (b) we can find an $\alpha_2 < \lambda$ such that $\dot{q}_1$ is a $P_{\alpha_2, b}$-name. Hence $p \in P_{\alpha_3, b}$ where $\alpha_3 = max\{\alpha_1, \alpha_2\}$.
\end{proof}

The next lemma gives us the consistency result.

\begin{lem}
	$V_{\lambda, m} \vDash  \mathfrak{b} = \beta \leq \mathfrak{r} =\mathfrak{e} = \mathfrak{s} = \lambda \leq  \delta = \mathfrak{d}$.
\end{lem}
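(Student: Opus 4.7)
The plan is to verify in $V_{\lambda,m}$ each equality in $\mathfrak{b}=\beta\leq\mathfrak{r}=\mathfrak{e}=\mathfrak{s}=\lambda\leq\mathfrak{d}=\delta$ using three main ingredients: Lemma~\ref{abs} (to reflect any real, small family of reals, or predictor of $V_{\lambda,m}$ into some intermediate model $V_{\alpha,b}$ with $\alpha<\lambda$ and $b\in Q$), the bookkeeping hypothesis on $F:Q\to\lambda$, and the preservation items (a)--(e) together with $\mathfrak{b}(Q)=\beta$ and $\mathfrak{d}(Q)=\delta$ from the corollary in Section~3.

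The three equalities $\mathfrak{s}=\mathfrak{r}=\mathfrak{e}=\lambda$ I would handle in parallel. For $\mathfrak{s}\leq\lambda$, the Cohen reals $\{c_{\alpha+1}\}$ added along the column at stages $\alpha+1\equiv 0\pmod{4}$ form a splitting family of size $\lambda$: any $a\in V_{\lambda,m}\cap[\omega]^\omega$ sits in some $V_{\alpha,m}$ by Lemma~\ref{abs}, and property~(b) applied with $b=m$ provides a later Cohen real splitting it. For $\mathfrak{s}\geq\lambda$: a putative splitting family of size $<\lambda$ is, by regularity of $\lambda$ and Lemma~\ref{abs}, contained in some $V_{\alpha,m}$, and the next Mathias real $u_{\alpha'+1}$ is unsplit over $V_{\alpha',m}$ via property~(c). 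The cases $\mathfrak{r}=\lambda$ and $\mathfrak{e}=\lambda$ are entirely analogous, using the Mathias reals for $\mathfrak{r}\leq\lambda$ and Cohen for $\mathfrak{r}\geq\lambda$, and $\mathbb{E}$ / $\mathbb{P}$ generics together with (d), (e) for $\mathfrak{e}$.

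For $\mathfrak{b}\geq\beta$, let $\mathcal{F}\subseteq{}^\omega\omega\cap V_{\lambda,m}$ with $|\mathcal{F}|<\beta$. Lemma~\ref{abs} gives $\alpha_f<\lambda$ and $b_f\in Q$ with $f\in V_{\alpha_f,b_f}$; since $|\mathcal{F}|<\beta\leq cf(\lambda)$ we have $\alpha^{\ast}:=\sup_f\alpha_f<\lambda$, and since $|\mathcal{F}|<\mathfrak{b}(Q)=\beta$ there is $b^{\ast}\in Q$ bounding all $b_f$, so $\mathcal{F}\subseteq V_{\alpha^{\ast},b^{\ast}}$. The bookkeeping then produces $c\in Q$ with $b^{\ast}\subsetneq c$ and $F(c)\geq\alpha^{\ast}$; since $J^\lambda=\emptyset$, the iterand at node $c$ in the top plane is Hechler forcing on $V^{P_{F(c),c}}\cap{}^\omega\omega\supseteq\mathcal{F}$, and its generic real $h_c\in V_{\lambda,m}$ dominates $\mathcal{F}$. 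The same bookkeeping shows that $\{h_c:c\in Q\}$ is a dominating family, giving $\mathfrak{d}\leq\delta=|Q|$.

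The opposite inequalities $\mathfrak{b}\leq\beta$ and $\mathfrak{d}\geq\delta$ exploit the computations of $\mathfrak{b}(Q)$ and $\mathfrak{d}(Q)$ in reverse. Fixing an unbounded family $\{a_\xi:\xi<\beta\}$ in $Q$, the claim is that $\{h_{a_\xi}\}\subseteq V_{\lambda,m}$ is unbounded in $({}^\omega\omega,<^{\ast})$: any dominating $f$ reflects via Lemma~\ref{abs} to some $V_{\alpha,b}$, and one deduces $a_\xi\subseteq b$ for every $\xi$, contradicting the choice of $\{a_\xi\}$. For $\mathfrak{d}\geq\delta$, assuming $\beta<\delta$ (else $\mathfrak{d}\geq\mathfrak{b}\geq\beta=\delta$), a dominating family $\mathcal{D}$ of size $<\delta$ produces $B:=\bigcup_{f\in\mathcal{D}} b_f\subseteq\delta$ of cardinality $<\delta$, and for $\gamma\in\delta\setminus B$ the Hechler real $h_{\{\gamma\}}$ fails to be dominated by any element of $\mathcal{D}$. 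The main obstacle in both of these last two steps is the preservation claim that if $a\not\subseteq b$ in $Q$, then no real in $V_{\lambda,b}$ eventually dominates $h_a$; I would establish this by factoring $P_{\lambda,b\cup a}\cong P_{\lambda,b}\ast\dot{R}$, where the quotient $\dot{R}$ introduces $h_a$ as a restricted Hechler real over an extension of the reals of some earlier model, and then invoking the standard preservation of unboundedness of Hechler generics --- the non-linear analogue, adapted to the index set $Q$, of the dominating-number computation in \cite{brndl/fschr} and \cite{ofb/vf}.
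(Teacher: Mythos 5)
Your treatment of $\mathfrak s=\mathfrak r=\mathfrak e=\lambda$, $\mathfrak b\geq\beta$ and $\mathfrak d\leq\delta$ matches the paper's argument. The gap is in the preservation statement underpinning $\mathfrak b\leq\beta$ and $\mathfrak d\geq\delta$. You formulate it as: if $a\not\subseteq b$ then no real of $V_{\lambda,b}$ eventually dominates $h_a$, and propose to get it by factoring $P_{\lambda,a\cup b}\cong P_{\lambda,b}\ast\dot R$ and citing a ``standard preservation of unboundedness of Hechler generics.'' No off-the-shelf principle applies here: when $a\not\subseteq b$, the set $\dot H^\lambda_a=V^{P_{F(a),a}}\cap{}^\omega\omega$ over which $h_a$ is Hechler generic is \emph{not} a subset of $V_{\lambda,b}\cap{}^\omega\omega$ (since $P_{F(a),a}$ does not completely embed into $P_{\lambda,b}$), so in the quotient $\dot R$ the real $h_a$ is not a Hechler generic over $V_{\lambda,b}$ in any ordinary sense, and you cannot simply invoke unboundedness of Hechler reals.

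What the paper actually proves (the Claim in the final lemma, with the roles of $a,b$ swapped relative to your wording) is weaker in hypothesis: if $g\in V_{F(b),b}$ and $a\not<_Q b$ then $h_a\not\leq^* g$. The restriction to $V_{F(b),b}$, rather than $V_{\lambda,b}$, is essential to the density argument: given a condition $p$ with $p(b)=(t,\dot g')$, one replaces $\dot g'$ by a name for the pointwise maximum of $\dot g'$ and $g$; this names a function still lying in $\dot H^\lambda_b=V^{P_{F(b),b}}\cap{}^\omega\omega$ precisely because $g\in V_{F(b),b}$, hence is a legal function part for the restricted Hechler at node $b$. One then decides the function parts at $b$ and at $a$ up to some $k$ and extends the stems so that the stem at $a$ exceeds both its own function part and $g$ up to $k$, producing an extension forcing $h_a(k)>g(k)$. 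Correspondingly, when applying the Claim for $\mathfrak b\leq\beta$ and $\mathfrak d\geq\delta$, the paper does not just reflect a competing real $f$ into some $V_{\alpha,b}$ by Lemma~\ref{abs}; it then uses the bookkeeping function to find $b'\supsetneq b$ with $F(b')=\alpha$, so that $f\in V_{\alpha,b}\subseteq V_{F(b'),b'}$, and applies the Claim at this pushed-up node $b'$. That push-up together with the explicit density argument is the real content that your phrase ``one deduces $a_\xi\subseteq b$'' is silently assuming.
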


\begin{proof}
	$\mathfrak{s} \leq \lambda$: The family $\mathcal{S}:= \{c_\alpha: ~ \lambda > \alpha \equiv 0$ mod $ 4\}$ added in the first column is a splitting family in the model $V_{\lambda, m}$. If this was not the case, then $\exists x \in V_{\lambda, m} \cap [\omega]^\omega ~ \forall c_\alpha \in \mathcal{S} ~ x \subseteq^* c_\alpha \lor x \subseteq^* (\omega \setminus c_\alpha)$. By Lemma \ref{abs} we have $\exists \alpha <  \lambda~ \alpha \equiv 3$ mod $4 \land x \in V_{\alpha, m} \cap [\omega]^\omega$. However on the other side we have $\clubsuit(V_{\alpha, m}, V_{\alpha+1, m}, c_{\alpha+1})$ meaning that $c_{\alpha+1}$ splits $x$.
	
	$\mathfrak{r} \leq \lambda$: The family $\mathcal{U}:= \{u_\alpha: ~ \lambda > \alpha \equiv 1$ mod $ 4\}$ added in the first column is a reaping family in the model $V_{\lambda, m}$. If this was not the case, then $\exists x \in V_{\lambda, m} \cap [\omega]^\omega ~ \forall u_\alpha \in \mathcal{U} ~ |u_\alpha \setminus x|= \aleph_0 = |x \cap u_\alpha|$. By Lemma \ref{abs} we have $\exists \alpha <  \lambda~ \alpha \equiv 0$ mod $4 \land x \in V_{\alpha, m} \cap [\omega]^\omega$. However on the other side we have $\blacklozenge(V_{\alpha, m}, V_{\alpha+1, m}, u_{\alpha+1})$ meaning that $u_{\alpha+1}$ is not split by $x$.
	
	$\mathfrak{e} \leq \lambda$: The family $\mathcal{E}:= \{e_\alpha: ~ \lambda > \alpha \equiv 2$ mod $ 4\}$ added in the first column is not predicted by a single predictor in the model $V_{\lambda, m}$. If this was not the case, then there would be a predictor $ \pi \in V_{\lambda, m}$ such that  $\forall e_\alpha \in \mathcal{S} ~ \pi$ predicts $e_\alpha$. As a predictor is also a countable object Lemma \ref{abs} implies $\exists \alpha <  \lambda~ \alpha \equiv 1$ mod $4 \land \pi \in V_{\alpha, m}$. However on the other side we have $\spadesuit(V_{\alpha, m}, V_{\alpha+1, m}, e_{\alpha+1})$ meaning that $e_{\alpha+1}$ evades $\pi$.
	
	$\mathfrak{s} \geq \lambda$: Let $\mathcal{A}$ be a set of reals in the final model $V_{\lambda, m}$ such that $|\mathcal{A}| < \lambda$. By Lemma \ref{abs} and the regularity of $\lambda$ we have $\exists \alpha <  \lambda~ \alpha \equiv 0$ mod $4 \land \mathcal{A} \subseteq V_{\alpha, m} \cap [\omega]^\omega$. However we have $\blacklozenge(V_{\alpha, m}, V_{\alpha+1, m}, u_{\alpha+1})$ meaning that $u_{\alpha+1}$ is not split by any element in $\mathcal{A}$, hence $\mathcal{A}$ is not splitting.
	
	$\mathfrak{r} \geq \lambda$: Let $\mathcal{A}$ be a set of reals in the final model $V_{\lambda, m}$ such that $|\mathcal{A}| < \lambda$. By Lemma \ref{abs} and the regularity of $\lambda$ we have $\exists \alpha <  \lambda~ \alpha \equiv 3$ mod $4 \land \mathcal{A} \subseteq V_{\alpha, m} \cap [\omega]^\omega$. However we have $\clubsuit(V_{\alpha, m}, V_{\alpha+1, m}, c_{\alpha+1})$ meaning that $s_{\alpha+1}$ splits any alement in $\mathcal{A}$, hence $\mathcal{A}$ is not reaping.
	
	$\mathfrak{e} \geq \lambda$: Let $\mathcal{A}$ be a set of reals in the final model $V_{\lambda, m}$ such that $|\mathcal{A}| < \lambda$. By Lemma \ref{abs} and the regularity of $\lambda$ we have $\exists \alpha <  \lambda~ \alpha \equiv 2$ mod $4 \land \mathcal{A} \subseteq V_{\alpha, m} \cap \omega^\omega$. However we have $\heartsuit(V_{\alpha, m}, V_{\alpha+1, m}, \pi_{\alpha+1})$ meaning that $\pi_{\alpha+1}$ predicts any alement in $\mathcal{A}$, hence $\mathcal{A}$ is not a witness for $\mathfrak{e}$.
	
	By the previous paragraphs we have $V_{\lambda, m} \vDash \mathfrak{s} = \mathfrak{r} = \mathfrak{e} = \lambda$.
	
	$\mathfrak{b} \geq \beta$: Let $B \subseteq V_{\lambda, m} \cap {}^\omega\omega$ be such that $|B|<\beta$. Since $\mathfrak{b}(Q) = \beta$ and by Lemma \ref{abs} we have $\exists b \in Q, \alpha < \lambda ~ B \subseteq V_{\alpha, b} \cap {}^\omega\omega$. As $\forall \alpha < \lambda \forall b \in Q~[b \uparrow \cap F^{-1}(\alpha) \not= \emptyset]$ we can find an element $b < b' \in Q$ with $F(b') = \alpha$. Then the poset $P_{\alpha +1, b'\cup \{max(b')+1\}}$ adds a dominating real over $V_{\alpha, b'} \cap {}^\omega\omega \supseteq V_{\alpha, b} \cap {}^\omega\omega$, hence $B$ is not unbounded.

	$\delta \geq \mathfrak{d}$: Let $\dot{f}$ be a $P_{\lambda, m}$-name for a real. By the previous Lemma \ref{abs} and $\mathfrak{b}(Q) = \beta \geq \aleph_1$ and $\lambda$ is regular uncountable, there is a $b \in Q, \alpha < \lambda ~ f \in V_{\alpha, b} \cap {}^\omega\omega$. Let $D \subseteq Q$ be a dominating family of size $\delta$ and let $d \in D$ be such that $b <_Q d$.  As $\forall \alpha < \lambda \forall b \in Q ~ [b \uparrow \cap F^{-1}(\alpha) \not= \emptyset]$ we can find an element $d < b^d_\alpha \in Q$ with $F(b^d_\alpha) = \alpha$. Then $P_{\alpha +1, b^d_\alpha \cup \{max(b^d_\alpha)+1\}}$ adds a dominating real over the model $V_{\alpha, b^d_\alpha} \supseteq V_{\alpha, b} $, call it $g^{b^d_\alpha}$. Hence the arbitrary $f$ is dominated by the set $\{g^{b^d_\alpha}: d \in D, \alpha \in \lambda\}$ which is of size $\delta*\lambda = \delta$.

	Now let $G$ be a $V_{\lambda, -1}$-generic subset of $T_{\lambda, m}$ and let $f^a_G =  \bigcup\{t(a): \exists p \in G~ [p(a) = (t(a), \dot{f}(a))]\}$.
	
	\begin{clm}
		If $g \in V_{F(a), a}$ and $b \not<_Q a$, then $f^b_G \not \leq^* g$.
	\end{clm}
	
	\begin{proof}
		Let $p$ be an arbitrary condition in $P_{\lambda, m}$ and $n \in \omega$, we will find an extension of $p$ which forces $f^b_G(k) > g(k)$ for some $k \geq n$. Let $p(a) = (t, \dot{g}')$ and $p(b) = (s, \dot{h})$. Let $\dot{g}$ be a $P_{\lambda, a}$-name for $g$. Let $\dot{f}$ be a $P_{\lambda, a}$-name for the pointwise maximum of $\dot{g}'$ and $\dot{g}$. Now define the condition $p_0$ as follows: $dom(p_0) = dom(p)$ and $p_0(c) = p(c)$ for each $c \not= a$, and $p_0(a) = (t, \dot{f})$. Clearly $p_0 \leq p$. Now let $k \in \omega$ be large enough such that $\{dom(t), dom(s), n \} \subset k$. Next let $q \in P_{\lambda, a}$ extend $p_0 \upharpoonright a$ (= $p_0 \upharpoonright Q_a$) and $q$ decides the value of $\dot{f}$ up to $k$. Now define the extension $p_1$ of $p_0$ by setting $p_1(c) = p_0(c)$ for each $c \not<_Q a$ and $p_1(c) = q(c)$ for each $c <_Q a$. So $p_1$ is a extension of $p_0$ carrying the information on the values of $\dot{f}$ up to $k$; and now we do the same for $b$ and $p_1$, so we let $r \in P_{\lambda, b}$ with $r \leq p_1 \upharpoonright b$ and $r$ decides the values of $\dot{h}$ up to $k$. We define the extension $p_2$ as $p_2(c) = p_1(c)$ for each $c \not<_Q b$ and $p_2(c) = r(c)$ for each $c <_Q b$. Now $p \geq p_0 \geq p_1 \geq p_2$ and $p_2(a) = p_0(a)$ and $p_2(b) = p(b)$. Now we extend $p_2$ as desired: First find a final extension $t' \supseteq t$ such that $dom(t') = k+1$ and for $dom(t) \leq i < dom(t') ~ t'(i)> \dot{f}(i)$. Then find a final extension $s' \supseteq s$ such that $dom(s') = k+1$ and for $dom(s) \leq i  < k+1 ~ [s'(i)> max\{\dot{h}(i), t'(i)\}]$. Then an extension satisfying the latter forces $f^b_G(k) > f(k)$ which gives the claim.
	\end{proof}
	
	$\beta \geq \mathfrak{b}$: Let $U \subseteq Q$ be an unbounded family of size $\beta$ and let $f \in V_{\lambda, m} \cap {}^\omega\omega$. As $f$ is a countable object and $\mathfrak{b}(Q) = \beta \geq \aleph_1$, there is an $a \in Q$ such that $f \in V_{\lambda, a} \cap {}^\omega\omega$. By Lemma \ref{abs} $f \in V_{\alpha, a} \cap {}^\omega\omega$. The book-keeping function $F$ ensures that $f \in V_{F(b), b} \cap {}^\omega\omega$ for some $a <_Q b$. As $U$ is unbounded $\exists u \in U~ u\not<_Q b$. Then by the last claim $f^u_G \not \leq^* f$. Hence $\{f^u_G: u \in U\}$ is an unbounded family of size $\beta$.
	
	$\delta \leq \mathfrak{d}$: Let $F \subseteq V_{\lambda, m} \cap {}^\omega\omega$ be a family of size less than $\delta$. As in the previous paragraph we can find for every single $f \in F$ a stage $a_f \in Q$ such that $f \in V_{F(a_f), a_f} \cap {}^\omega\omega$. Now $|\{a_f: f \in F\}| < \delta$, so $\{a_f: f \in F\}$ is not dominating in $Q$. Hence $\exists u \in Q \forall f \in F~ [u \not<_Q a_f]$. Then by the last claim $\forall f \in F~[ f^u_G \not \leq^* f]$. Hence $F$ is not dominating.
\end{proof}

The next theorem follows:

\begin{thm}\label{MT}
	If $\beta, \lambda, \delta, \mu$ are infinite cardinals with $\omega_1 \leq \beta = cf(\beta) \leq \lambda = cf(\lambda) \leq \delta \leq \mu$ and $cf(\mu) > \omega$, then there is a c.c.c. generic extension of the ground model in which $\beta = \mathfrak{b} \land \lambda = \mathfrak{r} = \mathfrak{s} = \mathfrak{e} \land \delta = \mathfrak{d} \land \mu = \mathfrak{c}$ holds.
\end{thm}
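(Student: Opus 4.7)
The plan is to derive Theorem \ref{MT} from the preceding lemma by a size count that pins the continuum down to $\mu$, since the values of $\mathfrak{b}$, $\mathfrak{s}$, $\mathfrak{r}$, $\mathfrak{e}$ and $\mathfrak{d}$ in $V_{\lambda,m}$ are already settled there.

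Without loss of generality I would assume $V\models\GCH$, which in particular gives $\mu^{\aleph_0}=\mu$ since $\cf(\mu)>\omega$. First I would pre-force with $\Fn(\mu,2)$ to pass to $V'$ satisfying $2^{\aleph_0}=\mu$; as this poset is c.c.c.\ and of size $\mu$, it preserves cardinals and cofinalities, and $V'$ still satisfies $\mu^{\aleph_0}=\mu$. Then inside $V'$ I would execute the construction of the previous section, producing the c.c.c.\ poset $P_{\lambda,m}$ and the final model $V_{\lambda,m}=V'[G]$.

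Next, I would invoke the preceding lemma to obtain $V_{\lambda,m}\models \mathfrak{b}=\beta\leq\mathfrak{r}=\mathfrak{e}=\mathfrak{s}=\lambda\leq\delta=\mathfrak{d}$. For the identity $\mathfrak{c}=\mu$ the lower bound is immediate: $P_{\lambda,m}$ is c.c.c.\ and $V'$ already contains $\mu$ reals. For the upper bound I would estimate $|P_{\lambda,m}|\leq\mu$ by a recursion along $\bar Q$. The linear column $P_{\lambda,-1}$ has size at most $\lambda\cdot\mu=\mu$, because each iterand among $\mathbb{C}$, $\mathbb{M}_U$, $\mathbb{E}$, $\mathbb{P}$ has a name of size at most $\mu$ at the relevant stage. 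By induction on $a\in Q$, each restricted Hechler $\mathbb{H}(\dot H^\alpha_a)$ likewise has a name of cardinality at most $\mu$, because $\dot H^\alpha_a$ names ${}^\omega\omega\cap V^{P_{F(a),a}}$ and the continuum remains bounded by $\mu$ at all intermediate stages. Since conditions in the non-linear part have finite support over $Q$ with $|Q|=\delta\leq\mu$, the total size is bounded by $\mu$. A standard nice-name count then yields $\mathfrak{c}^{V_{\lambda,m}}\leq\mu^{\aleph_0}=\mu$.

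The main obstacle is the inductive bookkeeping in this size estimate: one must verify along $\bar Q$ that $\mathfrak{c}$ stays bounded by $\mu$ in every intermediate model $V_{\alpha,a}$, so that the names $\dot H^\alpha_a$ do not silently inflate beyond $\mu$ as one climbs through $Q$. Once this is confirmed, combining the two bounds gives the desired consistency, and Theorem \ref{MT} follows.
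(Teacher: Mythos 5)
Your proposal takes a genuinely different route from the paper's. The paper's proof of Theorem \ref{MT} is a one-paragraph device: replace the index poset $Q=([\delta]^{<\beta},\subset)$ by a well-founded poset $R$ consisting of a linear chain of order type $\mu$ placed entirely below a copy of $Q$. One readily checks $\mathfrak{b}(R)=\mathfrak{b}(Q)=\beta$ and $\mathfrak{d}(R)=\mathfrak{d}(Q)=\delta$ (a bounded initial chain neither helps dominate nor hurts bounding), while $|R|=\mu$, so the $\mu$ Hechler reals added along the chain push $\mathfrak{c}$ up to $\mu$ with the remaining invariants controlled exactly as in the preceding lemma. In contrast, you pre-force with $\Fn(\mu,2)$ to pump $\mathfrak{c}$ up before the construction starts and then run the iteration over the original $Q$; this is also reasonable, since the extra Cohen reals live in $V_{0,-1}$ and hence are dominated, split, reaped, and predicted by the reals added along the column and the planes, and a nice-name count should cap $\mathfrak{c}$ at $\mu$.

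The one spot you must tighten: if you ``execute the construction inside $V'$'', the index poset is implicitly recomputed as $([\delta]^{<\beta})^{V'}$, and $(\delta^{<\beta})^{V'}$ need not equal $\delta$. Adding $\mu$ Cohen reals makes $2^{\kappa}\ge\mu$ for every infinite $\kappa$, so if $\cf(\mu)<\beta$ there is an infinite $\kappa<\beta$ with $\kappa\ge\cf(\mu)$, whence $(2^{\kappa})^{V'}\ge(\mu^{\cf(\mu)})^{V'}>\mu$ and therefore $|Q^{V'}|>\mu$, which breaks your ``$|Q|=\delta\le\mu$'' step. You should either fix the index poset to be the ground-model object $Q^V$ (of true cardinality $\delta$, and whose $\mathfrak{b}$ and $\mathfrak{d}$ are absolute) when defining the non-linear iteration over $V'$, or bound the iteration size some other way. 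The paper's replacement $Q\mapsto R$ sidesteps this cardinal-arithmetic bookkeeping in the Cohen extension entirely, which is the main advantage of its approach; yours, once repaired, is more elementary but requires this extra care.
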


\begin{proof}
	In the above construction replace the underlying poset $(Q, <_{Q} )= ([\delta]^{< \beta}, \subset)$ by the following poset $(R, <_{R})$: $R$ consists of pairs $(p, i)$ such that either $ i = 0 \land p \in \mu$ or $i = 1 \land p \in Q$. The order relation is defined as $(p,i) <_{R} (q,j)$ iff $i = 0 \land j = 1$ or $i = j = 1 \land p <_{Q} q$ or $i = j= 0 \land p < q$ in $\mu$.
\end{proof}

As it is shown in \cite{aubrey} $\mathfrak{r} < \mathfrak{d}$ implies $\mathfrak{u} = \mathfrak{r}$. So if we choose $\lambda < \delta$ in the statement of Theorem \ref{MT}, then also $\mathfrak{u}$ is determined in the model of Theorem \ref{MT}, giving the following corollary.

\begin{cor}
	If $\beta, \lambda, \delta, \mu$ are infinite cardinals with $\omega_1 \leq \beta = cf(\beta) \leq \lambda = cf(\lambda) < \delta \leq \mu$ and $cf(\mu) > \omega$, then  $Con( \beta = \mathfrak{b} \land \lambda = \mathfrak{r} = \mathfrak{u} = \mathfrak{s} = \mathfrak{e} \land \delta = \mathfrak{d} \land \mu = \mathfrak{c})$.
\end{cor}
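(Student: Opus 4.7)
The plan is very short because the corollary is essentially a direct deduction from Theorem \ref{MT} combined with the cited theorem of Aubrey. First I would invoke Theorem \ref{MT} with exactly the same cardinals $\beta, \lambda, \delta, \mu$ (the hypotheses $\omega_1 \leq \beta = cf(\beta) \leq \lambda = cf(\lambda)$, $\lambda \leq \delta \leq \mu$ and $cf(\mu) > \omega$ are all inherited from the corollary, with the strict inequality $\lambda < \delta$ still implying the non-strict $\lambda \leq \delta$ needed there). This produces a c.c.c. generic extension $V_{\lambda,m}$ of the ground model in which $\beta = \mathfrak{b}$, $\lambda = \mathfrak{r} = \mathfrak{s} = \mathfrak{e}$, $\delta = \mathfrak{d}$ and $\mu = \mathfrak{c}$ all hold simultaneously.

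Next I would use the strict inequality $\lambda < \delta$, which gives $\mathfrak{r} = \lambda < \delta = \mathfrak{d}$ in the extension. Now by Aubrey's result (cited above from \cite{aubrey}), the inequality $\mathfrak{r} < \mathfrak{d}$ implies $\mathfrak{u} = \mathfrak{r}$, so in our model $\mathfrak{u} = \mathfrak{r} = \lambda$. Combining this with the equalities established in the previous paragraph yields exactly the conjunction $\beta = \mathfrak{b} \land \lambda = \mathfrak{r} = \mathfrak{u} = \mathfrak{s} = \mathfrak{e} \land \delta = \mathfrak{d} \land \mu = \mathfrak{c}$, which is the desired consistency statement. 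There is no real obstacle here: the only thing to watch is that the strict inequality $\lambda < \delta$ is genuinely needed to trigger Aubrey's dichotomy, which is precisely why the corollary strengthens the hypothesis of Theorem \ref{MT} from $\lambda \leq \delta$ to $\lambda < \delta$.
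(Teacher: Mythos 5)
Your proof is correct and takes essentially the same approach as the paper: apply Theorem \ref{MT}, observe that $\lambda < \delta$ gives $\mathfrak{r} < \mathfrak{d}$ in the resulting model, and invoke Aubrey's theorem that $\mathfrak{r} < \mathfrak{d}$ implies $\mathfrak{u} = \mathfrak{r}$ to pin down $\mathfrak{u} = \lambda$.
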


\section{Remarks and Questions}\label{questions_section}

Four candidates, namely $\mathfrak{s}, \mathfrak{r}, \mathfrak{e}$ and $\mathfrak{u}$, for being between 
$\mathfrak{b}$ and $\mathfrak{d}$ are in fact controlled between these two invariants. One of them (among others) remained open. 

\begin{question}
Is it relatively consistent to have $\mathfrak{b} < \mathfrak{a} < \mathfrak{d} < \mathfrak{c}$?
\end{question}


\begin{thebibliography}{00}
	
	
		\bibitem{aubrey} Aubrey, Jason: Combinatorics for the Dominating and Unsplitting Numbers, The Journal of Symbolic Logic, 1 June 2004, Vol.69(2), pp.482-498.
	
	\bibitem{ofb/vf} \"O. F. Ba\u{g}, V. Fischer: Non-linear iterations and higher splitting, 2020, preprint.
	
	\bibitem{blass}
	Blass, Andreas: Combinatorial Cardinal Characteristics of the Continuum; in: M. Foreman/A. Kanamori [Ed.]: Handbook of Set Theory, Springer, Heidelberg/London/New York, 2010, pp. 395-489.
	
	\bibitem{brndl/fschr}
	J. Brendle/ V. Fischer: Mad families, splitting families and large continuum, The Journal of Symbolic Logic, Vol.76(1), 1 March 2011, pp.198-208. 
	
	\bibitem{brndl/shlh}
	J. Brendle/ S. Shelah: Evasion and Prediction II. Journal of the London Mathematical Society, Vol. 53(1), February 1996, pp. 19–27.
	
	\bibitem{cmmngs} J. Cummings/ S. Shelah: Cardinal invariants above the continuum, Annals of Pure and Applied Logic 75, 1995, pp. 251-268.
		
	\bibitem{kun} Kunen, Kenneth: Set Theory, College Publications, London, 2013.
	
	\bibitem{vnDwn} van Douwen, Eric K.: The integers and topology. Handbook of set-theoretic topology, North-Holland, Amsterdam/New York, 1984, pp. 111-167.
	
	
	
		
	
	
\end{thebibliography}
\end{document}